\def\ci{\begin{color}{red}\,}
\def\cf{\end{color}\,}
\newtheorem{thm}{Theorem}[section]
\newtheorem{corollary}[thm]{Corollary}
\newtheorem{lemma}[thm]{Lemma}
\newtheorem{proposition}[thm]{Proposition}
\newtheorem{definition}[thm]{Definition}
\newtheorem{theorem}[thm]{Theorem}
\newtheorem{remark}{Remark}[section]
\def\D{{\mathfrak D}\, }
\def\R{{\mathfrak R}\, }
\begin{document}
\begin{center}
{\bf Lie maps on alternative rings preserving idempotents}\footnote{
This work was supported by  FAPESP 19/03655-4; CNPq 302980/2019-9;  RFBR 20-01-00030. }

\vspace{.2in}
{\bf Bruno Leonardo Macedo Ferreira}
\\
%\vspace{.2in}
Federal Technological University of Paran\'{a},\\
Professora Laura Pacheco Bastos Avenue, 800,\\
85053-510, Guarapuava, Brazil.
\\
%\vspace{.2in}
brunoferreira@utfpr.edu.br
%\thanks{...}
\\
\vspace{.2in}

{\bf Henrique Guzzo Jr.}
\\
University of S\~{a}o Paulo,\\
Mat\~{a}o Street, 1010,\\
05508-090, S\~{a}o Paulo, Brazil.
\\

guzzo@ime.usp.br
%\thanks{ ... }
%\thanks{ ... }
\\
\vspace{.2in}

{\bf Ivan Kaygorodov}
\\
Federal University of ABC,\\
dos Estados Avenue, 5001,\\
09210-580, Santo Andr\'{e}, Brazil. 
\\
kaygorodov.ivan@gmail.com
\\
\vspace{.2in}

\end{center}

Keywords: Lie maps; alternative rings

\

AMS: 17A36, 17D05.

\begin{abstract}
Let $\R$ and $\R'$ unital $2$,$3$-torsion free alternative rings and $\varphi: \R \rightarrow \R'$ be a surjective Lie multiplicative map that preserves idempotents. Assume that $\R$ has a nontrivial idempotents. Under certain assumptions on $\R$, we prove that $\varphi$ is of the form $\psi + \tau$, where $\psi$ is either an isomorphism or the negative of an anti-isomorphism of $\R$ onto $\R'$ and $\tau$ is an additive mapping of $\R$ into the centre of $\R'$ which maps commutators into zero.
\end{abstract}

\vspace{0.5 in}
 \newpage
\section{Alternative rings and Lie multiplicative maps}

Let $\mathfrak{R}$ be a unital ring not necessarily associative or commutative and consider the following convention for its multiplication operation: $xy\cdot z = (xy)z$ and $x\cdot yz = x(yz)$ for $x,y,z\in \mathfrak{R}$, to reduce the number of parentheses. We denote the {\it associator} of $\mathfrak{R}$ by $(x,y,z)=xy\cdot z-x\cdot yz$ for $x,y,z\in \mathfrak{R}$. And $[x,y] = xy - yx$ is the usual Lie product of $x$ and $y$, with $x,y \in \mathfrak{R}$.

Let $\mathfrak{R}$ and $\mathfrak{R}'$ be two rings and $\varphi:\mathfrak{R}\rightarrow \mathfrak{R}'$ a map of $\mathfrak{R}$ into $\mathfrak{R}'$. We call $\varphi$ a {\it Lie multiplicative map} of $\mathfrak{R}$ into $\mathfrak{R}'$ if for all $x, y \in \mathfrak{R}$ 
\begin{eqnarray*}
\varphi\big([x,y]) = [\varphi(x),\varphi(y)].
\end{eqnarray*}
When $\varphi$ is an additive and bijective map we call {\it Lie isomorphism}. The study of Lie isomorphisms of rings was originally inspired by the work done by I. N. Herstein on generalizing classical theorems on the Lie structure of total matrix rings to results on the Lie structure of arbitrary simple rings.
In \cite{Mart}, W.S. Martindale studied Lie isomorphisms of a primitive ring $\R$ onto a primitive ring $\R'$, where he assumed that the characteristic of $\R$ is different from $2$ and $3$ and that $\R$ contained three nonzero orthogonal idempotents whose sum was the identity. A few years later \cite{Mart2}, he studied Lie isomorphisms of a simple ring $\R$ onto a simple ring $\R'$.    

A ring $\mathfrak{R}$ is said to be {\it alternative} if 
\[(x,x,y)=0 \mbox{ and }(y,x,x)=0, \mbox{ for all }x,y\in \mathfrak{R}.\] 

One easily sees that any associative ring is an alternative ring.

A ring $\mathfrak{R}$ is called {\it k-torsion free} if $k\,x=0$ implies $x=0,$ for any $x\in \mathfrak{R},$ where $k\in{\mathbb Z},\, k>0$, and {\it prime} if $\mathfrak{AB} \neq 0$ for any two nonzero ideals $\mathfrak{A},\mathfrak{B}\subseteq \mathfrak{R}$.
The {\it nucleus} of a ring $\mathfrak{R}$ \mbox{is defined by} $$\mathcal{N}(\mathfrak{R})=\{r\in \mathfrak{R}\mid (x,y,r)=0=(x,r,y)=(r,x,y) \hbox{ for all }x,y\in \mathfrak{R}\}.$$
The {\it commutative centre} of a ring $\mathfrak{R}$ \mbox{is defined by} $$\mathcal{Z}(\mathfrak{R})=\{r\in \mathcal{\R}\mid [r, x] = 0 \hbox{ for all }x \in \mathfrak{R}\}.$$
We will say that a map $\varphi: \R \rightarrow \R'$ \textit{preserves idempotents} if satisfies
$$e - \lambda f \in I(\R) \ \ \textrm{ if and only if } \ \ \varphi(e) - \lambda \varphi(f) \in I(\R')$$
where $I(\R)$ and $I(\R')$ are the set of all idempotents in $\R$ and $\R'$ respectively, $\lambda \in \mathbb{Q}$.

The next result can be found in \cite{bruth}
\begin{theorem}\label{meu}
Let $\mathfrak{R}$ be a $3$-torsion free alternative ring. So
$\mathfrak{R}$ is a prime ring if and only if $a\mathfrak{R} \cdot b=0$ (or $a \cdot \mathfrak{R}b=0$) implies $a = 0$ or $b =0$ for $a, b \in \mathfrak{R}$. 
\end{theorem}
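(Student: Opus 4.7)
The plan is to handle the two implications separately. The direction from the annihilator hypothesis to primeness is quick: suppose $\mathfrak{A}, \mathfrak{B}$ are nonzero ideals of $\mathfrak{R}$ with $\mathfrak{A}\mathfrak{B} = 0$, and pick $0 \neq a \in \mathfrak{A}$, $0 \neq b \in \mathfrak{B}$. Since $\mathfrak{A}$ is a two-sided ideal, $ar \in \mathfrak{A}$ for every $r \in \mathfrak{R}$, so $(ar)b \in \mathfrak{A}\mathfrak{B} = 0$; that is, $a\mathfrak{R}\cdot b = 0$. The hypothesis then forces $a = 0$ or $b = 0$, contradicting the choice. The variant with $a\cdot \mathfrak{R}b$ is symmetric.

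The converse is the substantive direction. Suppose $\mathfrak{R}$ is prime and $a\mathfrak{R} \cdot b = 0$, and let $\mathfrak{A}$ and $\mathfrak{B}$ denote the two-sided ideals of $\mathfrak{R}$ generated by $a$ and $b$ respectively. I would aim to show $\mathfrak{A}\mathfrak{B} = 0$, at which point primeness yields $\mathfrak{A} = 0$ or $\mathfrak{B} = 0$, and hence $a = 0$ or $b = 0$.

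The main obstacle is non-associativity: the ideal generated by an element of an alternative ring is genuinely more complicated than $\mathbb{Z}a + a\mathfrak{R} + \mathfrak{R}a + \mathfrak{R}a\mathfrak{R}$, and one cannot reassociate freely inside $\mathfrak{A}\mathfrak{B}$. The tools I would marshal are the Moufang identities
\[
(xax)y = x(a(xy)), \qquad y(xax) = ((yx)a)x, \qquad (xy)(zx) = x(yz)x,
\]
Artin's theorem that every subring of $\mathfrak{R}$ generated by two elements is associative, and the linearized alternative laws $(x,y,z) = -(y,x,z) = -(x,z,y)$. Starting from $(ar)b = 0$, I would apply these identities and their linearizations to show that every product one can form from a generator of $\mathfrak{A}$ followed by a generator of $\mathfrak{B}$ vanishes; the three-torsion free hypothesis is invoked to cancel factors of $3$ that appear when combining symmetric and alternating pieces of the associator in Kleinfeld-type manipulations. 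The parallel case $a\cdot \mathfrak{R}b = 0$ is handled by swapping the roles of the left and right Moufang identities throughout.
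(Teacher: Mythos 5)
The half you do prove is the easy one, and it is correct: if $\mathfrak{A}\mathfrak{B}=0$ for nonzero ideals, then for nonzero $a\in\mathfrak{A}$, $b\in\mathfrak{B}$ one has $a\mathfrak{R}\cdot b\subseteq\mathfrak{A}\mathfrak{B}=0$, contradicting the annihilator hypothesis. Note, for calibration, that the paper does not prove this theorem at all: it is quoted from \cite{bruth}, so the substantive content is entirely in the converse, which is exactly the part you leave as a plan.

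For that converse your text is a statement of intent, not a proof, and the intended route is the wrong one. You say you would show $\mathfrak{A}\mathfrak{B}=0$ for the ideals generated by $a$ and $b$ ``by applying'' the Moufang identities, Artin's theorem and linearized alternativity, with $3$-torsion freeness used to ``cancel factors of $3$''; but no such computation is exhibited, and none of the named tools gets you there. Artin's theorem only reassociates products of two generators, while every element of $\mathfrak{A}\mathfrak{B}$ involves at least $a$, $b$ and arbitrary ring elements, so it gives nothing; and even the description of the ideal generated by one element is already problematic, as you yourself note. Moreover the hypothesis is the single bracketing $(ax)b=0$ for all $x$; passing to $a(xb)=0$ is not free in an alternative ring (which is precisely why the statement lists the two conditions separately), and your sketch never tracks which one is in hand. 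The known proof of this criterion is not an identity-juggling argument: one shows that the hypothesis produces absolute zero divisors (elements $c$ with $c\mathfrak{R}c=0$), invokes the nontrivial theorem that in a $3$-torsion free alternative ring such elements generate locally nilpotent ideals, so a prime $3$-torsion free alternative ring is nondegenerate, and then applies the primality criterion for nondegenerate alternative rings of Beidar--Mikhalev--Slin'ko \cite{beidar} (equivalently, one can route the argument through Slater's structure theory of prime alternative rings). That structure theory is where the $3$-torsion free hypothesis genuinely enters; it is not a matter of dividing by $3$ in a Kleinfeld-style expansion. As it stands, the ``prime $\Rightarrow$ annihilator condition'' direction is missing, and filling it along the lines you propose would require results you have not stated or proved.
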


\begin{definition}
A ring $\R$ is said to be flexible if satisfies
$$xy \cdot x = x \cdot yx \ \ for \ all \ x,y \in \R.$$
It is known that all alternative rings are flexible.
\end{definition}

\begin{comment}
\begin{proposition}\label{Lflexivel}
Let $\R$ be a alternative ring then $\R$ satisfies 
$$(x,y,z) + (z,y,x) = 0 \ \ for \ \ all \ x,y,z \in \R.$$
\end{proposition}
\begin{proof} Just linearize the identity $(x,y,x) = 0$.
\end{proof}
\end{comment}
\vspace{.2in}
A nonzero element $e_{1}\in \mathfrak{R}$ is called an {\it idempotent} if $e_{1}e_{1}=e_{1}$ and a {\it nontrivial idempotent} if it is an idempotent different from the multiplicative identity element of $\mathfrak{R}$. Let us consider $\mathfrak{R}$ an alternative ring and fix a nontrivial idempotent $e_{1}\in\mathfrak{R}$. Let \mbox{$e_2 \colon\mathfrak{R}\rightarrow\mathfrak{R}$} and $e'_2 \colon\mathfrak{R}\rightarrow\mathfrak{R}$ be linear operators given by $e_2(a)=a-e_1a$ and $e_2'(a)=a-ae_1.$ Clearly $e_2^2=e_2,$ $(e_2')^2=e_2'$ and we note that if $\mathfrak{R}$ has a unity, then we can consider $e_2=1-e_1\in \mathfrak{R}$. Let us denote $e_2(a)$ by $e_2a$ and $e_2'(a)$ by $ae_2$. It is easy to see that $e_ia\cdot e_j=e_i\cdot ae_j~(i,j=1,2)$ for all $a\in \mathfrak{R}$. Then $\mathfrak{R}$ has a Peirce decomposition
$\mathfrak{R}=\mathfrak{R}_{11}\oplus \mathfrak{R}_{12}\oplus
\mathfrak{R}_{21}\oplus \mathfrak{R}_{22},$ where
$\mathfrak{R}_{ij}=e_{i}\mathfrak{R}e_{j}$ $(i,j=1,2)$ \cite{He}, satisfying the following multiplicative relations:
\begin{enumerate}\label{asquatro}
\item [\it (i)] $\mathfrak{R}_{ij}\mathfrak{R}_{jl}\subseteq\mathfrak{R}_{il}\
(i,j,l=1,2);$
\item [\it (ii)] $\mathfrak{R}_{ij}\mathfrak{R}_{ij}\subseteq \mathfrak{R}_{ji}\
(i,j=1,2);$
\item [\it (iii)] $\mathfrak{R}_{ij}\mathfrak{R}_{kl}=0,$ if $j\neq k$ and
$(i,j)\neq (k,l),\ (i,j,k,l=1,2);$
\item [\it (iv.a)] $x_{ij}^{2}=0,$ for all $x_{ij}\in \mathfrak{R}_{ij}\ (i,j=1,2;~i\neq j);$

\item [\it (iv.b)] $x_{ij}y_{ij}=-y_{ij}x_{ij},$ for all $x_{ij},y_{ij}\in \mathfrak{R}_{ij}\ (i,j=1,2;~i\neq j).$

\end{enumerate}

The first result about the additivity of mappings on rings was given by Martindale III in \cite{Mart}, 
he established a condition on a ring $\mathfrak{R}$ such that every multiplicative isomorphism on $\mathfrak{R}$ 
is additive. In \cite{chang, changd}, Li and his coauthors also considered the 
almost additivity of maps for the case of Lie multiplicative mappings. They proved 

\begin{theorem}\label{cha}
Let $\mathfrak{R}$ be an associative ring containing a nontrivial idempotent $e_1$ and satisfying the
following condition: 
\[ (\mathbb{Q}) \mbox{  If }A_{11}B_{12} = B_{12}A_{22} \mbox{ for all }B_{12} \in \mathfrak{R}_{12},\] 
\noindent then $A_{11} + A_{22} \in \mathcal{Z}(\mathfrak{R})$. Let $\mathfrak{R}'$ be another ring. Suppose that a bijection map $\Phi\colon \mathfrak{R} \rightarrow \mathfrak{R}'$ satisfies
$$\Phi([A,B]) = [\Phi(A),\Phi(B)]$$
for all $A,B \in \mathfrak{R}$. Then $\Phi(A + B) = \Phi(A) + \Phi(B) + Z_{A,B}'$ for all $A,B \in \mathfrak{R}$, where $Z_{A,B}'$ is an element in the commutative centre $\mathcal{Z}(\mathfrak{R}')$ of $\mathfrak{R}'$ depending on $A$ and $B$.
\end{theorem}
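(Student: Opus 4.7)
The plan is to follow the Martindale-style strategy for Lie multiplicative bijections, using the Peirce decomposition $\mathfrak{R}=\mathfrak{R}_{11}\oplus\mathfrak{R}_{12}\oplus\mathfrak{R}_{21}\oplus\mathfrak{R}_{22}$ attached to the nontrivial idempotent $e_1$. A short warm-up shows $\Phi(0)=0$ (from $\Phi([0,0])=[\Phi(0),\Phi(0)]=0$) and that $\Phi$ carries $\mathcal{Z}(\mathfrak{R})$ into $\mathcal{Z}(\mathfrak{R}')$: if $z\in\mathcal{Z}(\mathfrak{R})$ then $[\Phi(z),\Phi(x)]=\Phi([z,x])=\Phi(0)=0$ for every $x$, and bijectivity of $\Phi$ finishes the point. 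This is what ensures the correction terms $Z'_{A,B}$ end up in $\mathcal{Z}(\mathfrak{R}')$.

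The main body of the argument proceeds in layers, establishing almost-additivity first on each off-diagonal Peirce component $\mathfrak{R}_{12},\mathfrak{R}_{21}$, then on mixed sums of two components such as $\mathfrak{R}_{ii}\oplus\mathfrak{R}_{ij}$ and $\mathfrak{R}_{ij}\oplus\mathfrak{R}_{ji}$, and finally on the diagonal components $\mathfrak{R}_{11},\mathfrak{R}_{22}$. In each step, to compare $\Phi(X+Y)$ with $\Phi(X)+\Phi(Y)$, surjectivity lets me pick $T\in\mathfrak{R}$ with $\Phi(T)=\Phi(X)+\Phi(Y)$, and then the identity $\Phi([T,W])=[\Phi(T),\Phi(W)]$ is compared with $\Phi([X,W])+\Phi([Y,W])$ for well-chosen test elements $W$, typically $W=e_1$ or $W$ in an off-diagonal Peirce space. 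Because commutators like $[X_{12},e_1]=-X_{12}$ and $[e_1,X_{21}]=X_{21}$ isolate individual Peirce components, these computations propagate additive information from $\mathfrak{R}'$ back to $\mathfrak{R}$ and force $T-(X+Y)\in\mathcal{Z}(\mathfrak{R})$, which gives the desired central difference.

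The main obstacle, and the only place where hypothesis $(\mathbb{Q})$ is essential, is the purely diagonal case: showing that $\Phi(A_{11}+C_{11})-\Phi(A_{11})-\Phi(C_{11})$ is central, and analogously for $\mathfrak{R}_{22}$. Here the commutators $[A_{11},\cdot\,]$ and $[C_{11},\cdot\,]$ vanish on the diagonal, so one can only extract information by testing against off-diagonal $B_{12}$ and $B_{21}$. The relations that emerge have precisely the shape $U_{11}B_{12}=B_{12}U_{22}$ for all $B_{12}\in\mathfrak{R}_{12}$ (and a dual statement on the other side), and this is exactly what condition $(\mathbb{Q})$ converts into $U_{11}+U_{22}\in\mathcal{Z}(\mathfrak{R})$. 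Without $(\mathbb{Q})$ this step collapses, which is why it is built into the hypothesis.

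Finally, for arbitrary $A,B\in\mathfrak{R}$ I would decompose $A=\sum_{i,j}A_{ij}$ and $B=\sum_{i,j}B_{ij}$, iterate the four almost-additivity lemmas inside each Peirce block and then across blocks, and collect the central corrections produced at each step into a single element $Z'_{A,B}\in\mathcal{Z}(\mathfrak{R}')$. This yields the stated identity $\Phi(A+B)=\Phi(A)+\Phi(B)+Z'_{A,B}$.
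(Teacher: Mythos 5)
The paper does not actually prove Theorem~\ref{cha}: it is quoted as a known result of Li and coauthors (\cite{chang,changd}), so the only fair comparison is with the standard Martindale-style argument used there, and your outline does follow that standard scheme (Peirce decomposition, test elements chosen via surjectivity, condition $(\mathbb{Q})$ reserved for the diagonal case). However, as written your text is a plan rather than a proof: every load-bearing step is asserted, not carried out. Concretely: (i) you never prove additivity (or almost-additivity) on the off-diagonal components $\mathfrak{R}_{12},\mathfrak{R}_{21}$ or on the mixed sums $\mathfrak{R}_{ii}\oplus\mathfrak{R}_{ij}$, $\mathfrak{R}_{12}\oplus\mathfrak{R}_{21}$ --- these require specific commutator identities, and testing against $e_1$ alone (which only isolates Peirce components of a single element) does not produce them; (ii) in the diagonal case you claim the relation $U_{11}B_{12}=B_{12}U_{22}$ ``emerges'', but you do not derive it, and you say nothing about why the off-diagonal parts $U_{12},U_{21}$ of $T-(A_{11}+C_{11})$ vanish, which condition $(\mathbb{Q})$ by itself does not give (it only concerns the diagonal part, so the extra consequences of $(\mathbb{Q})$, or separate tests against $\mathfrak{R}_{21}$, must be invoked); (iii) the layering must be shown to close: the comparison $\Phi([T,W])=\Phi([X,W])+\Phi([Y,W])$ is only usable once you already know $\Phi([X,W])+\Phi([Y,W])=\Phi([X,W]+[Y,W])$ for the particular elements arising, and verifying that the chosen order of cases supplies exactly this (with no circularity) is the real content of the proof, which the outline omits.

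There is also a small but genuine logical gap at the end of each step: from $T-(X+Y)=z\in\mathcal{Z}(\mathfrak{R})$ and $\Phi(T)=\Phi(X)+\Phi(Y)$ you cannot immediately conclude $\Phi(X+Y)-\Phi(X)-\Phi(Y)\in\mathcal{Z}(\mathfrak{R}')$, because $\Phi$ is not yet known to be additive. One needs the extra observation that
\begin{equation*}
[\Phi(T-z)-\Phi(T),\Phi(W)]=\Phi([T-z,W])-\Phi([T,W])=0
\end{equation*}
for all $W$, so that $\Phi(T-z)-\Phi(T)$ is central in $\mathfrak{R}'$ by surjectivity; your phrase ``which gives the desired central difference'' silently assumes this. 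None of these gaps indicates a wrong strategy --- the route is the correct one --- but each must be filled before the argument counts as a proof.
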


In \cite{posd}, Ferreira and Guzzo investigated the additivity of Lie multiplicative map. They obtained the
following result. 

\begin{theorem}\label{FerGur} Let $\mathfrak{R}$ and $\mathfrak{R}'$ be alternative rings.
Suppose that $\mathfrak{R}$ is a ring containing a nontrivial idempotent $e_1$ which satisfies:
\begin{enumerate}
\item[\it (i)] If $[a_{11}+ a_{22}, \mathfrak{R}_{12}] = 0$, then $a_{11} + a_{22} \in \mathcal{Z}(\mathfrak{R}),$
\item[\it (ii)] If $[a_{11}+ a_{22}, \mathfrak{R}_{21}] = 0$, then $a_{11} + a_{22} \in \mathcal{Z}(\mathfrak{R}).$
\end{enumerate}
Then every Lie multiplicative bijection $\varphi$ of $\mathfrak{R}$ onto an arbitrary alternative ring $\mathfrak{R}'$ is almost additive.
\end{theorem}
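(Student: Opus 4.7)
The plan is to prove almost additivity via the Peirce decomposition $\mathfrak{R} = \mathfrak{R}_{11} \oplus \mathfrak{R}_{12} \oplus \mathfrak{R}_{21} \oplus \mathfrak{R}_{22}$ associated with the fixed nontrivial idempotent $e_1$, exploiting the elementary identities $[e_1, x_{12}] = x_{12}$, $[e_1, x_{21}] = -x_{21}$, and $[e_1, x_{ii}] = 0$. The basic object of study is $T_{a,b} := \varphi(a+b) - \varphi(a) - \varphi(b)$; by surjectivity of $\varphi$ it suffices to show, for each pair $a,b \in \mathfrak{R}$, that $[T_{a,b}, \varphi(c)] = 0$ for every $c \in \mathfrak{R}$, and thus that $T_{a,b} \in \mathcal{Z}(\mathfrak{R}')$.

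First I would verify $\varphi(0) = 0$ directly from $\varphi(0) = \varphi([0,0]) = [\varphi(0),\varphi(0)] = 0$. Then I would establish almost additivity case by case along the Peirce components. For off-diagonal elements $a_{12}, b_{12} \in \mathfrak{R}_{12}$, the identity $a_{12} + b_{12} = [e_1, a_{12}+b_{12}]$ combined with Lie multiplicativity permits a direct comparison between $\varphi(a_{12}+b_{12})$ and $\varphi(a_{12})+\varphi(b_{12})$ modulo an error term; hypothesis \emph{(i)} is invoked at the crucial moment to force that error into $\mathcal{Z}(\mathfrak{R}')$, and \emph{(ii)} handles $\mathfrak{R}_{21}$ analogously. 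For diagonal elements $a_{11}, b_{11} \in \mathfrak{R}_{11}$, one studies $[a_{11}+b_{11}, x_{12}] = (a_{11}+b_{11})x_{12}$ and compares, via $\varphi$, with $[a_{11},x_{12}] + [b_{11},x_{12}]$, transporting the information through the already proved additivity on $\mathfrak{R}_{12}$; condition \emph{(i)} again extracts centrality of the defect, and condition \emph{(ii)} dispatches $\mathfrak{R}_{22}$ symmetrically. Cross-component additivity, where $a$ and $b$ lie in distinct Peirce pieces, then follows quickly from these cases together with the Peirce multiplication rules \emph{(iii)--(iv.b)}. Finally one assembles the full statement by writing $a = \sum a_{ij}$ and $b = \sum b_{ij}$ and iterating the partial additivity results, collecting every residual error into a single element of $\mathcal{Z}(\mathfrak{R}')$.

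The main obstacle is the diagonal pieces $\mathfrak{R}_{11}$ and $\mathfrak{R}_{22}$: because $[e_1, x_{ii}] = 0$, the commutator with $e_1$ does not detect them, and there is no direct analogue of the trick used in the off-diagonal cases. Hypotheses \emph{(i)} and \emph{(ii)} are precisely the technical axioms needed at this point: they convert ``$T$ commutes with all of $\mathfrak{R}_{12}$ (resp.\ $\mathfrak{R}_{21}$)'' into ``$T$ is central'', which is exactly what the commutator computations deliver. A secondary subtlety is nonassociativity: since $\mathfrak{R}$ is only alternative, several identities that are routine in the associative setting of Theorem \ref{cha} must be rederived using the flexible law, the Moufang identities, and the Peirce relations \emph{(i)--(iv)}; this is where the $2$- and $3$-torsion freeness and the structural hypotheses on $\mathfrak{R}$ combine to keep the argument alive.
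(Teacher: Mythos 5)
This theorem is not proved in the present paper at all: it is quoted from \cite{posd}, so the only fair comparison is with the strategy of that cited proof, which is the standard Martindale-style bootstrap that your outline also gestures at (Peirce decomposition, the defect $T_{a,b}=\varphi(a+b)-\varphi(a)-\varphi(b)$, surjectivity to test centrality). The problem is that your sketch has genuine gaps exactly where the work lies. The central one is the off-diagonal case: you claim that the identity $a_{12}+b_{12}=[e_1,a_{12}+b_{12}]$ ``permits a direct comparison'' of $\varphi(a_{12}+b_{12})$ with $\varphi(a_{12})+\varphi(b_{12})$. It does not. Applying $\varphi$ and Lie multiplicativity only gives $\varphi(a_{12}+b_{12})=[\varphi(e_1),\varphi(a_{12}+b_{12})]$, which pins down the Peirce components of $\varphi(a_{12}+b_{12})$ (this is precisely the content of Lemma \ref{lema2} of the paper) but contains no relation at all to $\varphi(a_{12})+\varphi(b_{12})$, because the sum is never separated inside $\varphi$. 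The actual proofs require different devices: mixed-component lemmas (for instance, first controlling $\varphi(a_{11}+b_{12})-\varphi(a_{11})-\varphi(b_{12})$), and identities expressing the relevant sum as a commutator of two elements whose images under $\varphi$ are already understood. None of these is supplied, or even identified, in your proposal; the diagonal case is likewise only described, not argued.

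The second gap is a circularity in your reduction. It is true that, by surjectivity, $T_{a,b}\in\mathcal{Z}(\mathfrak{R}')$ follows once $[T_{a,b},\varphi(c)]=0$ for all $c$; but Lie multiplicativity turns this bracket into $\varphi([a+b,c])-\varphi([a,c])-\varphi([b,c])$, which is again an additivity defect, now for the pair $[a,c],[b,c]$. Without a carefully ordered bootstrap --- choosing $c$ so that one of $[a,c],[b,c]$ vanishes, or so that the new pair lies in components where exact (not merely almost) additivity has already been established --- the argument chases its own tail, and your sketch does not address this ordering at all. Relatedly, hypotheses \emph{(i)} and \emph{(ii)} are conditions on $\mathfrak{R}$, not on $\mathfrak{R}'$; they cannot directly convert ``$T_{a,b}$ commutes with $\mathfrak{R}'_{12}$'' into centrality of $T_{a,b}$ in $\mathfrak{R}'$, and in the cited proof they are applied to elements of $\mathfrak{R}$ (reached, e.g., through $\varphi^{-1}$), a distinction your outline blurs. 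Finally, you invoke $2$- and $3$-torsion-freeness, which are not hypotheses of this particular statement. In short, you have correctly described the shape of the argument of \cite{posd}, but the decisive identities, the mixed-term lemmas, and the order of the partial additivity results are missing, and the one concrete mechanism you do propose for the off-diagonal case does not work.
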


In a recent paper, Ferreira and Guzzo studied the characterization of multiplicative Lie derivation 
on alternative rings, see \cite{FerGur}. They obtained the following result.

\begin{theorem} \label{Fegu2}
Let $\R$ be a unital $2$,$3$-torsion free alternative ring with nontrivial idempotents $e_1$, $e_2$ and with associated 
Peirce decomposition $\R = \R_{11} \oplus \R_{12} \oplus \R_{21} \oplus \R_{22}$. Suppose that $\mathfrak{R}$ satisfies the following conditions:
\begin{enumerate}
\item [{\rm (1)}] If $x_{ij}\R_{ji} = 0$, then $x_{ij} = 0$ $(i \neq j);$
\item [{\rm (2)}] If $x_{11}\R_{12} = 0$ or $\R_{21}x_{11} = 0$, then $x_{11} = 0;$
\item [{\rm (3)}] If $\R_{12}x_{22} = 0$ or $x_{22}\R_{21} = 0$, then $x_{22} = 0;$
\item [{\rm (4)}] If $z \in \mathcal{Z}(\mathfrak{R})$ with $z \neq 0$, then $z\R = \R.$
\end{enumerate}
Let $\D \colon  \R \longrightarrow \R$ be a multiplicative Lie derivation of $\mathfrak{R}$. 
Then $\D$ is the form $\delta + \tau$, where $\delta$ is an additive derivation of $\R$ and $\tau$ is a mapping from 
$\R$ into the commutative centre $\mathcal{Z}(\mathfrak{R})$,  which maps commutators into the zero if and only if
\begin{enumerate}
\item[(a)] $e_2\D(\R_{11})e_2 \subseteq \mathcal{Z}(\R) e_2,$
\item[(b)] $e_1\D(\R_{22})e_1 \subseteq \mathcal{Z}(\R) e_1.$
\end{enumerate}
\end{theorem}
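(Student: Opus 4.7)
The statement is an equivalence. The forward implication---if $\D=\delta+\tau$ then (a) and (b) hold---is a short Peirce-block computation: for $x_{11}\in\R_{11}$, one uses $x_{11}=e_1 x_{11} e_1$, the vanishing $e_2 e_1 = 0 = e_1 e_2$, and the Leibniz rule for $\delta$ to conclude that $e_2\delta(x_{11})e_2=0$; since $\tau(x_{11})\in\mathcal{Z}(\R)$ commutes with $e_2$, we obtain $e_2\D(x_{11})e_2 = \tau(x_{11})e_2 \in \mathcal{Z}(\R)e_2$, which is (a). Condition (b) follows by the symmetric argument on the other idempotent.

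For the substantive converse, I would adopt the standard Peirce-decomposition strategy, adapted to the alternative setting. The first step is to analyse the block structure of $\D(x_{ij})$ by plugging well-chosen commutators into the Lie-derivation identity $\D([x,y])=[\D(x),y]+[x,\D(y)]$: the relations $[e_1,x_{12}]=x_{12}$, $[e_1,x_{21}]=-x_{21}$, and $[e_1,x_{ii}]=0$, combined with commutators of the form $[x_{ij},y_{ji}]$ and the primality-type hypotheses (1)--(3), pin down $\D(\R_{ij})\subseteq \R_{ij}+\mathcal{Z}(\R)$ for $i\neq j$ and control the off-diagonal blocks of $\D(x_{ii})$. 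Conditions (a) and (b) enter precisely here: they are exactly what is needed to promote the ``wrong-diagonal'' blocks $e_2\D(x_{11})e_2$ and $e_1\D(x_{22})e_1$ from arbitrary elements of $\R_{22}$ and $\R_{11}$ into $\mathcal{Z}(\R)e_2$ and $\mathcal{Z}(\R)e_1$, so that they can be absorbed into a central correction. The second step is to define $\tau\colon\R\to\mathcal{Z}(\R)$ by collecting these central portions on each block and extending, and to set $\delta:=\D-\tau$; by construction $\delta$ respects the Peirce grading. The third step is to verify that $\delta$ is a derivation by expanding $\delta(xy)-\delta(x)y - x\delta(y)$ block by block, using the Lie identity together with the multiplicative relations (i)--(iv.b) and the hypotheses (1)--(4). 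Additivity of $\delta$ is then obtained along the lines of Theorem \ref{FerGur}: any non-additive residue is forced by hypothesis (4) to be central and is reabsorbed into $\tau$, while the fact that $\tau$ sends commutators to zero follows because $\delta$ itself respects commutators by construction.

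The main obstacle I expect is the interplay between non-associativity and the Lie identity. In the alternative setting one has $\R_{ij}\R_{ij}\subseteq \R_{ji}$ rather than $\R_{ij}\R_{ij}=0$, and associators of the form $(\R_{ii},\R_{ij},\R_{jj})$ need not vanish, so several manipulations that are mechanical in the associative case---as in the proof of Theorem \ref{cha}---must be replaced by careful use of the flexible identity $xy\cdot x = x\cdot yx$ and of the alternative identities $(x,x,y)=(y,x,x)=0$. I expect the technical heart of the argument to lie in verifying the Leibniz rule on products $x_{ij}y_{ij}\in\R_{ji}$ and on mixed products such as $x_{ii}y_{ij}$, for it is here that the full strength of hypotheses (1)--(4) is needed to ``test'' the discrepancy against enough Peirce elements to force its vanishing.
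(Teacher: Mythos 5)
A preliminary remark: the paper you were given does not prove Theorem \ref{Fegu2} at all; it quotes it from \cite{FerGur}, so the only comparison available is with the cited argument and with correctness as such. Your forward implication is fine: writing $x_{11}=e_1x_{11}$, expanding $\delta(x_{11})$ by the Leibniz rule and using the Peirce multiplication rules shows the $\R_{22}$-component of $\delta(x_{11})$ vanishes, so $e_2\D(x_{11})e_2=\tau(x_{11})e_2\in\mathcal{Z}(\R)e_2$, and (b) is symmetric.

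The converse, however, is not merely under-detailed; its first step is false as stated. You claim that the Lie identity together with (1)--(3) and (a),(b) pins down $\D(\R_{ij})\subseteq\R_{ij}+\mathcal{Z}(\R)$ for $i\neq j$, and that your $\delta:=\D-\tau$ ``respects the Peirce grading by construction.'' Take $\R=M_2(\mathbb{Q})$ with matrix units $e_{11},e_{12},e_{21},e_{22}$, $e_1=e_{11}$, $e_2=e_{22}$, and $\D=\mathrm{ad}(e_{21})$, i.e. $\D(x)=e_{21}x-xe_{21}$. This is an additive (hence multiplicative Lie) derivation, hypotheses (1)--(4) hold, and (a),(b) hold because $\D(\R_{11})\cup\D(\R_{22})\subseteq\R_{21}$; yet $\D(e_{12})=e_{22}-e_{11}\notin\R_{12}+\mathcal{Z}(\R)$. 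Moreover, since $e_{12}=[e_1,e_{12}]$ is a commutator, any admissible $\tau$ vanishes on it, so the only decomposition is $\delta=\D$, $\tau=0$, and this $\delta$ does not preserve the grading. Hence a construction that defines $\tau$ blockwise and forces $\delta$ to be Peirce-graded cannot produce the asserted decomposition: the off-diagonal Peirce components of $\D(e_1)$ create an unavoidable inner, non-graded part. The missing idea is the standard normalization: first subtract the inner derivation $\mathrm{ad}\bigl(e_2\D(e_1)e_1-e_1\D(e_1)e_2\bigr)$ (an additive derivation), prove the graded block statements and build the decomposition for the corrected map, and only then absorb the inner part back into $\delta$. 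Without this step your block analysis, the definition of $\tau$, and the deferred Leibniz and additivity verifications do not get started; and even granting it, those verifications (notably on products in $\R_{ij}\R_{ij}\subseteq\R_{ji}$, where associators genuinely intervene, and the transfer of almost-additivity) are announced in your sketch rather than carried out.
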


Inspired by the above-mentioned results, we are planning to give a result about Lie multiplicative maps on alternative rings.

\begin{remark}
Note that prime alternative rings satisfy (1), (2), (3). 
\end{remark}

\begin{proposition}\label{prop4}
Let $\R$, $\R'$ be $2$-torsion free alternative ring. If $\varphi: \R \rightarrow \R'$ is a surjective Lie multiplicative map that preserves idempotents then $\varphi$ is injective and $\varphi(\lambda r) = \lambda \varphi(r)$ for every $\lambda \in \mathbb{Q}$ and $r \in \R$.
\end{proposition}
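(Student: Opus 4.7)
The plan is to proceed in three stages, first establishing scalar linearity of $\varphi$ up to a central discrepancy, then killing that discrepancy using the idempotent-preserving hypothesis, and finally deducing injectivity.

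For the basic invariants, $\varphi(0) = 0$ is immediate from $\varphi([0,0]) = [\varphi(0),\varphi(0)] = 0$. Assuming $\R,\R'$ are unital (as in the rest of the paper), I would next show $\varphi(1_\R) = 1_{\R'}$: centrality of $1_\R$ gives $[1_\R, r] = 0$ for every $r$, whence $\varphi(1_\R) \in \mathcal{Z}(\R')$ by Lie multiplicativity and surjectivity; idempotent preservation at $\lambda = 0$ gives $\varphi(1_\R) \in I(\R')$; and applying idempotent preservation to $1_\R - f \in I(\R)$ for every $f \in I(\R)$, then expanding $(\varphi(1_\R)-\varphi(f))^2 = \varphi(1_\R)-\varphi(f)$ using the centrality of $\varphi(1_\R)$, yields $\varphi(1_\R)\varphi(f) = \varphi(f)$ for every $f \in I(\R)$. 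Since surjectivity plus idempotent preservation force $\varphi(I(\R)) = I(\R')$, taking $f$ with $\varphi(f) = 1_{\R'}$ gives $\varphi(1_\R) = 1_{\R'}$.

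Next, for scalar linearity, I would start with $\lambda = -1$: the identity $[-r, s] = [s, r]$ combined with Lie multiplicativity gives $[\varphi(-r),\varphi(s)] = -[\varphi(r),\varphi(s)]$, so $\varphi(-r) + \varphi(r) \in \mathcal{Z}(\R')$ by surjectivity. Analogously, for general $\lambda \in \mathbb{Q}$, the bilinear identity $[\lambda r, s] = [r, \lambda s] = \lambda[r, s]$ combined with Lie multiplicativity (bootstrapped from the $\lambda=-1$ case, with $2$-torsion-freeness legitimising denominators encountered in $\mathbb Q$) places $\varphi(\lambda r) - \lambda\varphi(r) \in \mathcal{Z}(\R')$. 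To kill this central error I would exploit the full rational strength of the idempotent-preservation hypothesis: for suitable pairs $(e,f)$ of idempotents and rational $\lambda$, preservation of the condition $ef + fe = (1+\lambda)f$ imposes strong algebraic relations on $\varphi(e),\varphi(f)$, and combining these with $\varphi(1_\R) = 1_{\R'}$ and the Peirce decomposition of $\R$ induced by the paper's standing nontrivial-idempotent context, one forces the central discrepancy to vanish.

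Finally, once $\varphi(\lambda r) = \lambda\varphi(r)$ is established for $\lambda \in \mathbb Q$, injectivity follows: if $\varphi(a) = \varphi(b)$, then $\varphi([a,s]) = \varphi([b,s])$ for every $s \in \R$, and the rational linearity together with Lie multiplicativity translate this into $\varphi([a-b,s])$ being a controlled element of $\mathcal{Z}(\R')$; a Peirce-decomposition argument then forces $a-b = 0$. The main obstacle I foresee is the central-error-killing step in the second stage: Lie multiplicativity by itself only places $\varphi(\lambda r)-\lambda\varphi(r)$ in $\mathcal{Z}(\R')$, and it is precisely the $\lambda \in \mathbb Q$ strength of the idempotent-preserving hypothesis, combined with $\varphi(1_\R)=1_{\R'}$ and the Peirce structure, that provides the rigidity needed to eliminate the central part.
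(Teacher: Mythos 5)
There is a genuine gap, and the route you chose bypasses the one hypothesis that makes this proposition essentially trivial. The preservation hypothesis is stated for \emph{arbitrary} elements: $x-\lambda y\in I(\R)$ if and only if $\varphi(x)-\lambda\varphi(y)\in I(\R')$, for every $\lambda\in\mathbb{Q}$. The paper's proof applies this to pairs whose combination is $0$ (trivially idempotent): from $(\lambda r)-\lambda r\in I(\R)$ one gets that $s=\varphi(\lambda r)-\lambda\varphi(r)$ is an idempotent of $\R'$, and from $r-\tfrac{1}{\lambda}(\lambda r)\in I(\R)$ that $-\tfrac{1}{\lambda}s$ is also idempotent; the two idempotency relations give $-\tfrac{1}{\lambda}s=\tfrac{1}{\lambda^{2}}s$, hence $s=0$ for $\lambda\neq 0,-1$, then $\varphi(-r)=-2\varphi(\tfrac12 r)=-\varphi(r)$ and $\varphi(0)=0$. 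Injectivity is the $\lambda=1$ case: $\varphi(r)-\varphi(s)=0\in I(\R')$ forces $r-s$ and $s-r$ to be idempotents of $\R$, so $r-s=(r-s)^{2}=s-r$ and $2$-torsion-freeness gives $r=s$. No unit, no centre, no nontrivial idempotent and no Peirce decomposition enter; note the proposition does not even assume $\R$ unital or that an idempotent exists.

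Your plan, by contrast, has unfixable holes as written. First, the claim that $\varphi(\lambda r)-\lambda\varphi(r)\in\mathcal{Z}(\R')$ for all $\lambda\in\mathbb{Q}$ does not follow from Lie multiplicativity: to show $[\varphi(\lambda r)-\lambda\varphi(r),\varphi(t)]=0$ you need $\varphi(\lambda[r,t])=\lambda\varphi([r,t])$, which is exactly the homogeneity being proved; only $\lambda=-1$ comes for free via $[-r,t]=[t,r]$, and the phrase ``bootstrapped \dots with $2$-torsion-freeness legitimising denominators'' is not an argument (without additivity, which at this point is not available and in the paper requires the extra hypotheses of Theorem \ref{FerGur}, you cannot even handle $\lambda=2$). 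Second, the ``central-error-killing'' step is a sketch of intent, not a proof, and it leans on $\varphi(1_\R)=1_{\R'}$, on a nontrivial idempotent and on the Peirce decomposition, none of which are hypotheses of Proposition \ref{prop4}. Third, your injectivity stage silently uses additivity ($\varphi([a,t])=\varphi([b,t])$ does not give you control of $\varphi([a-b,t])$ without it) and again appeals to an unavailable Peirce argument, whereas conditions $(1)$--$(4)$ that could turn centrality into vanishing are not assumed here. The repair is simply to use the idempotent-preservation hypothesis directly on non-idempotent pairs, as above.
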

\begin{proof}
In first we will prove the injectivity. Suppose $\varphi(r) = \varphi(s)$ for some $r, s \in \R$. Since $\varphi(r) - \varphi(s)$ is idempotent in $\R'$ then $r-s$ is an idempotent in $\R$. In the same way $\varphi(s) - \varphi(r)$ is idempotent in $\R'$ and therefore also $s-r$ is an idempotent in $\R$. Since $r - s$
and $s - r$ are both idempotents in $\R$ it follows that $r-s = (r - s)^2 = s - r$, so $r = s$ and $\varphi$ is injective.
Now we will prove $\varphi(\lambda r) = \lambda \varphi(r)$ for every $\lambda \in \mathbb{Q}$ and $r \in \R$. Let $r \in \R$ and let $\lambda \in \mathbb{Q}$ with $\lambda \neq 0, -1$. Then $(\lambda r) - \lambda r \in  I(\R)$ and therefore $s = \varphi(\lambda r) - \lambda \varphi(r) \in I(\R')$. Similarly $r - (\frac{1}{\lambda})(\lambda r) \in I(\R)$ and so also $(\frac{-1}{\lambda})s = \varphi(r) - (\frac{1}{\lambda})\varphi(\lambda r) \in I(\R')$.
It follows that $-\frac{1}{\lambda}s = \frac{1}{\lambda^2}s^2 = \frac{1}{\lambda^2}s$ and so $s\left(1 + \frac{1}{\lambda}\right) = 0$. Therefore $\varphi(\lambda r) = \lambda \varphi(r)$ with $\lambda \neq 0, -1$. To $\lambda = -1$ we have $\varphi(-r) = -2\varphi(\frac{1}{2}r) = -\varphi(r)$. And to $\lambda = 0$ follows from
$\varphi(0) = \varphi([0,0]) = [\varphi(0), \varphi(0)] = 0$.
\end{proof}

\begin{proposition}\label{prop2}
Let $\R$ be a $2,3$-torsion free alternative ring satisfying $(1)$, $(2)$, $(3)$.
\begin{enumerate}
\item [$(\spadesuit)$] If $[a_{11}+ a_{22}, \mathfrak{R}_{12}] = 0,$ then $a_{11} + a_{22} \in \mathcal{Z}(\mathfrak{R}),$
\item [$(\clubsuit)$] If $[a_{11}+ a_{22}, \mathfrak{R}_{21}] = 0,$ then $a_{11} + a_{22} \in \mathcal{Z}(\mathfrak{R})$.
\end{enumerate}
\end{proposition}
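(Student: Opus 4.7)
The plan is to exploit the Peirce decomposition rules together with the fact that in any alternative ring the associator is alternating in its three arguments.

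First I would simplify the hypothesis of $(\spadesuit)$. Because $\R_{22}\R_{12}=0=\R_{12}\R_{11}$ by Peirce rule (iii), the equation $[a_{11}+a_{22}, x_{12}]=0$ collapses to
\[ a_{11}x_{12} = x_{12}a_{22} \qquad \text{for every } x_{12}\in\R_{12}. \]
Call this identity $(\ast)$. To conclude $a_{11}+a_{22}\in\mathcal{Z}(\R)$ it suffices to verify that this sum commutes with each of $\R_{11}, \R_{22}, \R_{21}$, and splitting via Peirce once more reduces those three cases to
\[ a_{11}r_{11}=r_{11}a_{11}, \quad a_{22}r_{22}=r_{22}a_{22}, \quad a_{22}r_{21}=r_{21}a_{11}. \]

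For the first identity I would multiply $(\ast)$ on the left by $r_{11}\in\R_{11}$ and reassociate. By the alternating property the associators $(r_{11},a_{11},x_{12})$ and $(r_{11},x_{12},a_{22})$ are, up to sign, equal to associators two of whose entries live in Peirce components with product $0$ (such as $\R_{12}\R_{11}$ or $\R_{22}\R_{12}$), so both associators vanish. After sliding $r_{11}$ through and invoking $(\ast)$ a second time on $a_{11}(r_{11}x_{12})$, the equation collapses to $(r_{11}a_{11}-a_{11}r_{11})x_{12}=0$ for all $x_{12}\in\R_{12}$, and condition (2) delivers $a_{11}r_{11}=r_{11}a_{11}$. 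The second identity is perfectly symmetric: multiply $(\ast)$ on the right by $r_{22}\in\R_{22}$ and finish with condition (3).

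The third identity is the step I expect to require the most care, because the hypothesis only concerns $\R_{12}$ directly. My approach is to multiply $a_{22}r_{21}-r_{21}a_{11}\in\R_{21}$ on the left by an arbitrary $x_{12}$. Since $x_{12}r_{21}\in\R_{11}$, the already-established commutativity of $a_{11}$ with $\R_{11}$ is available in the middle of the expression; combined with the vanishing of $(x_{12},a_{22},r_{21})$ and $(a_{11},x_{12},r_{21})$ (each again reduced by cyclic skew-symmetry to an associator with two Peirce-orthogonal factors) and one further application of $(\ast)$, this produces $\R_{12}(a_{22}r_{21}-r_{21}a_{11})=0$, after which condition (1) finishes the proof of $(\spadesuit)$. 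Statement $(\clubsuit)$ then follows by the mirror argument with the roles of $\R_{12}$ and $\R_{21}$ interchanged, invoking the left-sided halves of conditions (2) and (3) in place of the right-sided halves.
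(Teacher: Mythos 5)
The paper itself does not prove Proposition \ref{prop2}: it defers to \cite{FerGur}, where the argument is the same kind of direct Peirce computation you outline. Your overall route is the expected one: collapse the hypothesis of $(\spadesuit)$ to $a_{11}x_{12}=x_{12}a_{22}$ for all $x_{12}\in\R_{12}$, then check commutation with $\R_{11}$, $\R_{22}$, $\R_{21}$ separately, killing the relevant associators via the alternating law together with Peirce orthogonality. Your two diagonal cases are correct and match the hypotheses as stated: they produce $[r_{11},a_{11}]\R_{12}=0$ and $\R_{12}[r_{22},a_{22}]=0$, which are exactly the shapes handled by $(2)$ and $(3)$.

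There is, however, a concrete misstep at the end of your third identity. Multiplying $d=a_{22}r_{21}-r_{21}a_{11}\in\R_{21}$ on the \emph{left} by $x_{12}$ does give, as you say, $x_{12}d=[a_{11},x_{12}r_{21}]=0$, i.e.\ $\R_{12}\,d=0$; but condition $(1)$ reads $x_{21}\R_{12}=0\Rightarrow x_{21}=0$, an annihilation condition on the \emph{right}, and none of $(1)$--$(3)$ asserts $\R_{12}x_{21}=0\Rightarrow x_{21}=0$. So the final appeal to $(1)$ does not apply to what you derived. The repair is the mirror computation: multiply $d$ on the right by $x_{12}$; the same associator bookkeeping plus one application of $(\ast)$ gives $dx_{12}=[a_{22},r_{21}x_{12}]$, which vanishes by your already-established commutation of $a_{22}$ with $\R_{22}$, whence $d\R_{12}=0$ and $(1)$ applies literally. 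A smaller point of precision: the fact that two entries of an associator multiply to zero does not by itself annihilate the associator (the term $x(yz)$ can survive); what you actually need, and what does hold in every instance you use, is that after a cyclic permutation (which leaves the associator unchanged) \emph{both} terms vanish, e.g.\ $(r_{11},a_{11},x_{12})=(a_{11},x_{12},r_{11})=(a_{11}x_{12})r_{11}-a_{11}(x_{12}r_{11})=0$ since $\R_{12}\R_{11}=0$. For $(\clubsuit)$ your mirror argument, which multiplies on the right by elements of $\R_{21}$, does line up with the left-sided halves of $(2)$, $(3)$ and with $(1)$, so no change is needed there.
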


\begin{proposition}\label{prop3}
If $\mathcal{Z}(\R_{ij}) = \left\{a \in \R_{ij} ~ | ~ [a, \R_{ij}] = 0 \right\}$ then $\mathcal{Z}(\R_{ij}) \subseteq \R_{ij} + \mathcal{Z}(\R)$ with $i \neq j$.
\end{proposition}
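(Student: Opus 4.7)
The plan is to read the proposition as asserting that the centralizer $\mathcal{Z}(\R_{ij}) = \{a \in \R : [a,\R_{ij}] = 0\}$ of $\R_{ij}$ in the whole ring $\R$ is contained in $\R_{ij} + \mathcal{Z}(\R)$; this is the only non-trivial reading, since were $\mathcal{Z}(\R_{ij})$ already a subset of $\R_{ij}$ the conclusion would be immediate. I will fix $i \neq j$ and, by symmetry, treat the case $(i,j)=(1,2)$ in detail, with $(2,1)$ analogous after swapping Proposition \ref{prop2}$(\spadesuit)$ and $(\clubsuit)$.

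The key step is a Peirce expansion of the bracket. For $a \in \mathcal{Z}(\R_{12})$, I would write $a = a_{11}+a_{12}+a_{21}+a_{22}$ and expand $[a,x_{12}]$ using the Peirce multiplicative rules together with the alternative-ring identity (iv.b), which collapses $[a_{12},x_{12}]$ into $-2\,x_{12}a_{12} \in \R_{21}$. Each summand then lands in a single Peirce block, so setting the whole expression to zero for arbitrary $x_{12}$ yields four block-by-block identities, of which two are essential: $a_{21}\R_{12}=0$ and $[a_{11}+a_{22},\R_{12}]=0$. From condition (1) of Theorem \ref{Fegu2}, the first forces $a_{21}=0$; from Proposition \ref{prop2}$(\spadesuit)$, the second places $a_{11}+a_{22}$ in $\mathcal{Z}(\R)$. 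Together these give $a = a_{12} + (a_{11}+a_{22}) \in \R_{12}+\mathcal{Z}(\R)$, with the free $a_{12}$ component absorbed harmlessly into the $\R_{12}$ summand.

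The main obstacle I anticipate is not the final extraction but the Peirce expansion of $[a,x_{12}]$ itself: in an alternative (non-associative) ring products cannot be freely rearranged, and the identity $x_{ij}y_{ij} = -y_{ij}x_{ij}$ from (iv.b) must be applied carefully in order to isolate the contribution of $a_{12}$ entirely within the $\R_{21}$ block and keep the $\R_{11}$ and $\R_{22}$ components of $[a_{21},x_{12}]$ clean enough for condition (1) to be triggered. Once the bracket is decomposed correctly, the remainder of the argument is an immediate application of Proposition \ref{prop2} and hypothesis (1), with no further non-associative subtlety required.
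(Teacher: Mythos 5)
Your proof is correct. The paper itself gives no argument for Proposition \ref{prop3} (it defers to \cite{FerGur}), and as you note the statement as printed is vacuous since its elements already lie in $\R_{ij}$, so your reading of $\mathcal{Z}(\R_{ij})$ as the centralizer $\left\{a \in \R ~|~ [a,\R_{ij}]=0\right\}$ taken in all of $\R$ is the only substantive one; your Peirce-block expansion of $[a,x_{ij}]$, which yields $a_{ji}\R_{ij}=0$ (hence $a_{ji}=0$ by condition (1)) and $[a_{ii}+a_{jj},\R_{ij}]=0$ (hence $a_{ii}+a_{jj}\in\mathcal{Z}(\R)$ by Proposition \ref{prop2}), is precisely the expected argument behind the cited result. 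The one point worth making explicit is that the conclusion is not hypothesis-free: you implicitly use the standing assumptions of the paper, namely conditions (1)--(3) and $2$,$3$-torsion freeness, which are exactly what Proposition \ref{prop2} requires.
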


The reader can find the proof of these results in \cite{FerGur}.

\begin{remark}\label{obs1}
Let $\R$ be a $2$,$3$-torsion free alternative ring, $\R'$ another alternative ring and $\varphi: \R \rightarrow \R'$ be a surjective Lie multiplicative map that preserves idempotents. Note that $\varphi(e_1) = f_1$ is a nontrivial idempotent   in $\R'$ because $\varphi$  is a bijective map that preserves idempotents. Therefore $\R'$ has a Peirce decomposition $\R' = \R'_{11} \oplus \R'_{12} \oplus \R'_{21} \oplus \R'_{22}$ associated to the nontrivial idempotent   $f_1$.   
\end{remark}

\section{Main theorem}

We shall prove as follows the main result of this paper.

\begin{theorem}\label{mainthm} 
Let $\R$ be a unital $2$,$3$-torsion free alternative ring, $\R'$ another alternative ring and $\varphi: \R \rightarrow \R'$ be a surjective Lie multiplicative map that preserves idempotents. Assume that $\R$ has a nontrivial idempotent $e_1$ with associated Peirce decomposition $\R = \R_{11} \oplus \R_{12} \oplus \R_{21} \oplus \R_{22}$, such that 
\begin{enumerate}
\item [\it (1)] If $x_{ij}\R_{ji} = 0$ then $x_{ij} = 0$ $(i \neq j);$
\item [\it (2)] If $x_{11}\R_{12} = 0$ or $\R_{21}x_{11} = 0$ then $x_{11} = 0;$
\item [\it (3)] If $\R_{12}x_{22} = 0$ or $x_{22}\R_{21} = 0$ then $x_{22} = 0;$
\item [\it (4)] If $z \in \mathcal{Z}(\R)$ with $z \neq 0$ then $z\R = \R$.
\end{enumerate}
Then $\varphi$ is the form $\psi + \tau$, where $\psi$ is an additive isomorphism of $\R$ into $\R'$ and $\tau$ is a map from $\R$ into $\mathcal{Z}(\R')$, which maps commutators into the zero provided that
\begin{enumerate}
\item[$\left(\dagger\right)$] $f_i\varphi(\R_{jj})f_i \subseteq \mathcal{Z}(\R') f_i$
\end{enumerate}
or
$\varphi$ is the form $\psi + \tau$, where $\psi$ is a negative of an additive anti-isomorphism of $\R$ into $\R'$ and $\tau$ is a map from $\R$ into $\mathcal{Z}(\R')$, which maps commutators into the zero provided that
\begin{enumerate}
\item[$\left(\dagger \dagger \right)$] $f_i\varphi(\R_{ii})f_i \subseteq \mathcal{Z}(\R') f_i,$
\end{enumerate}
where $f_i = \varphi(e_i)$ and $f_j = 1_{\R'} - f_i$, $i \neq j$.
\end{theorem}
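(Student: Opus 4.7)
The plan is to combine the preparatory results from the excerpt with a Peirce-component analysis. By Proposition \ref{prop4}, $\varphi$ is a bijection and $\mathbb{Q}$-homogeneous. Since $\R$ satisfies (1)--(3), Proposition \ref{prop2} supplies the hypotheses $(\spadesuit)$ and $(\clubsuit)$ needed to apply Theorem \ref{FerGur}, giving the almost-additivity decomposition
\[
\varphi(x+y) = \varphi(x) + \varphi(y) + Z'_{x,y}, \qquad Z'_{x,y} \in \mathcal{Z}(\R').
\]
Setting $f_1 = \varphi(e_1)$ and $f_2 = 1_{\R'} - f_1$, Remark \ref{obs1} gives a Peirce decomposition of $\R'$ with respect to which I can localise $\varphi$.

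Next I would apply $\varphi$ to the identities $[e_1, x_{12}] = x_{12}$, $[e_1, x_{21}] = -x_{21}$, $[e_1, x_{ii}] = 0$, and read off from the Peirce decomposition of $\R'$ (together with the fact that bijectivity transfers $2$-torsion freeness from $\R$ to $\R'$) the inclusions $\varphi(\R_{12}) \subseteq \R'_{12}$, $\varphi(\R_{21}) \subseteq \R'_{21}$, and $\varphi(\R_{ii}) \subseteq \R'_{11} \oplus \R'_{22}$. Hypothesis $(\dagger)$ then forces $f_i\varphi(x_{jj})f_i \in \mathcal{Z}(\R')f_i$ for $i\neq j$, so modulo $\mathcal{Z}(\R')$ the image $\varphi(x_{jj})$ lies in $\R'_{jj}$ - the diagonal Peirce blocks are preserved, and we expect an isomorphism. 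Hypothesis $(\dagger\dagger)$ instead forces $f_i\varphi(x_{ii})f_i \in \mathcal{Z}(\R')f_i$, so modulo $\mathcal{Z}(\R')$ the image $\varphi(x_{ii})$ lies in $\R'_{jj}$ - the blocks are swapped, and the resulting $\psi$ should be the negative of an anti-isomorphism. In either case, I would define $\tau(x)$ to collect both the almost-additivity corrections and the central parts arising on $\R_{11}$ and $\R_{22}$, and set $\psi = \varphi - \tau$. Additivity of $\psi$ is then immediate from the construction, and $\tau$ annihilates commutators because $[\tau(x),r]=0$ for every $r\in\R'$ while $\varphi([x,y]) = [\varphi(x), \varphi(y)] = [\psi(x), \psi(y)]$.

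The remaining task, showing $\psi(xy) = \psi(x)\psi(y)$ under $(\dagger)$ and $\psi(xy) = -\psi(y)\psi(x)$ under $(\dagger\dagger)$, is the main obstacle. The strategy is a sixteen-case Peirce analysis on $\psi(x_{ij} y_{kl})$: when $j \neq k$ and $(i,j)\neq(k,l)$ both sides vanish by the Peirce relation (iii), and when $j = k$ one extracts the desired equation from the Lie multiplicativity $\psi([a,b]) = [\psi(a), \psi(b)]$ applied to carefully chosen $a,b$ (often involving $e_1$ or $e_2$), together with the Peirce product rules (i)--(iv.b). The difficulty is the non-associativity of $\R$ and $\R'$: associators must be controlled via flexibility and the identities (iv.a), (iv.b), and the non-degeneracy hypotheses (1)--(3) are used repeatedly to cancel residual central remainders that would otherwise prevent the identification of the two sides. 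Finally, condition (4) is indispensable both for inverting central scalars when comparing expressions of the form $z\,\psi(r) = z\,\psi(s)$, and for promoting bijectivity of $\psi$ modulo $\mathcal{Z}(\R')$ to genuine bijectivity of $\psi$ onto $\R'$.
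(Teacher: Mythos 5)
Your outline reproduces the paper's architecture faithfully: almost-additivity via Proposition \ref{prop2} and Theorem \ref{FerGur}, localisation of $\varphi$ on the Peirce components by applying it to $[e_1,x_{12}]=x_{12}$, $[e_1,x_{21}]=-x_{21}$, $[e_1,x_{ii}]=0$ and then invoking $(\dagger)$ or $(\dagger\dagger)$ (this is exactly Lemmas \ref{lema2}, \ref{lema3}, \ref{lema32}), the componentwise definition of $\psi$ with $\tau=\varphi-\psi$, and the final observation that $\tau$ kills commutators. Up to that point the proposal is sound and is the same route as the paper.

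The gap is that the part you yourself call ``the main obstacle'' --- multiplicativity of $\psi$ --- is only gestured at, and the method you sketch (``extract the desired equation from Lie multiplicativity applied to carefully chosen $a,b$'') does not by itself settle the two genuinely hard Peirce cases. First, for diagonal products $a_{ii}b_{ii}$ the commutator $[a_{ii},b_{ii}]$ carries no information about $a_{ii}b_{ii}$; the paper instead tests the identity against an arbitrary $r_{ij}$, uses case $(I)$ twice on $(a_{ii}b_{ii})r_{ij}=a_{ii}(b_{ii}r_{ij})$, and then needs the \emph{transferred} nondegeneracy conditions $(2')$, $(3')$ in $\R'$ (Remark \ref{impobs}) to cancel $\psi(r_{ij})$; your sketch never mentions that (2)--(3) must first be transported to $\R'$ through $\varphi$. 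Second, and more seriously, for $a_{ij}b_{ji}$ the commutator $[a_{ij},b_{ji}]=a_{ij}b_{ji}-b_{ji}a_{ji\,}a_{ij}$ mixes the $\R_{ii}$ and $\R_{jj}$ blocks, so its image under $\varphi$ only determines $\psi(a_{ij}b_{ji})$ up to an unknown central element $z'=\tau([a_{ij},b_{ji}])$, and nothing in your outline shows $z'$ is harmless. The paper's resolution is a specific device: multiply by $\psi(a_{ij})$, use case $(II)$, prove $\psi(a_{ij}b_{ji}a_{ij})=\psi(a_{ij})\psi(b_{ji})\psi(a_{ij})$ from the identity $[[a_{ij},b_{ji}],a_{ij}]=2a_{ij}b_{ji}a_{ij}$ together with $2$-torsion freeness, conclude $\psi(a_{ij})z'=0$, hence $a_{ij}z=0$ for a nonzero central $z$, and then use hypothesis $(4)$ (choosing $h$ with $zh=1$) to force $a_{ij}=0$, a contradiction. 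Your stated uses of $(4)$ --- inverting central scalars in equations $z\psi(r)=z\psi(s)$ and upgrading bijectivity of $\psi$ --- are not where the hypothesis actually does its work, so as written the proposal is missing the key idea that makes the crucial case $(V)$ (and its anti-isomorphism analogue $(V')$) go through.
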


\vspace{.1in}
The following Lemmas has the same hypotheses of Theorem \ref{mainthm} and we need these Lemmas for the proof of this Theorem. 

\begin{lemma}\label{lema2}
Let $i, j \in \left\{1, 2\right\}$ with $i \neq j$. Then $\varphi(\R_{ij}) = \R'_{ij}$.
\end{lemma}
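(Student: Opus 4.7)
The plan is to characterize Peirce components via a single commutator with $e_1$ (respectively $f_1$) and then transport the characterization across $\varphi$ by combining the Lie multiplicative property with the bijectivity and rational linearity from Proposition~\ref{prop4}. The elementary starting points are that $[e_1, x_{12}] = x_{12}$ for $x_{12} \in \R_{12}$ and $[e_1, x_{21}] = -x_{21}$ for $x_{21} \in \R_{21}$, while symmetrically any $a = a_{11} + a_{12} + a_{21} + a_{22}$ in $\R'$ satisfies $[f_1, a] = a_{12} - a_{21}$. An auxiliary step I will need first is that $\R'$ inherits $2$-torsion freeness from $\R$: if $2a' = 0$ in $\R'$, then surjectivity writes $a' = \varphi(r)$, Proposition~\ref{prop4} gives $\varphi(2r) = 2\varphi(r) = 0 = \varphi(0)$, and injectivity forces $2r = 0$, whence $r = 0$ and $a' = 0$.

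For the forward inclusion $\varphi(\R_{12}) \subseteq \R'_{12}$, I apply $\varphi$ to $[e_1, x_{12}] = x_{12}$ and expand $\varphi(x_{12}) = a_{11} + a_{12} + a_{21} + a_{22}$ in the Peirce decomposition of $\R'$ to obtain $a_{12} - a_{21} = a_{11} + a_{12} + a_{21} + a_{22}$; matching Peirce components gives $a_{11} = a_{22} = 0$ and $2 a_{21} = 0$, and the torsion transfer kills $a_{21}$, so $\varphi(x_{12}) \in \R'_{12}$. The same argument applied to $[e_1, x_{21}] = -x_{21}$ gives $\varphi(\R_{21}) \subseteq \R'_{21}$.

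For the reverse inclusion $\R'_{12} \subseteq \varphi(\R_{12})$, let $a \in \R'_{12}$ and use surjectivity to write $a = \varphi(r)$. From $[f_1, a] = a$ and the Lie multiplicative property, $\varphi([e_1, r]) = \varphi(r)$, and injectivity yields $[e_1, r] = r$; a Peirce decomposition of $r$ together with the $2$-torsion freeness of $\R$ forces $r \in \R_{12}$, so $a \in \varphi(\R_{12})$. The case $\R'_{21} \subseteq \varphi(\R_{21})$ is identical. I do not anticipate any real obstacle here; the only delicate point is the initial transfer of $2$-torsion freeness to $\R'$, without which the Peirce bookkeeping would leave a residual $(2,1)$ component in $\varphi(\R_{12})$.
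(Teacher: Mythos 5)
Your proof is correct and follows essentially the same route as the paper: the identity $x_{12}=[e_1,x_{12}]$ plus a Peirce-component comparison in $\R'$ gives the forward inclusion, and your pullback of $a\in\R'_{12}$ through injectivity is just the paper's "apply the same argument to $\varphi^{-1}$" phrased directly. Your preliminary transfer of $2$-torsion freeness to $\R'$ only makes explicit a point the paper leaves implicit (its abstract assumes $\R'$ is $2,3$-torsion free), so no further comment is needed.
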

\begin{proof}
We show just the case $i = 1$ and $j=2$ because the other case is similar. Let $a_{12} \in \R_{12}$. It follows from the facts $a_{12} = [e_1, a_{12}]$ that 
$$\varphi(a_{12}) = [\varphi(e_1), \varphi(a_{12})] = \varphi(a_{12})_{12} - \varphi(a_{12})_{21}.$$
This implies that $\varphi(a_{12}) = \varphi(a_{12})_{12}$ for all $a_{12} \in \R_{12}$. Thus $\varphi(a_{12}) \in \R'_{12}$. Applying the same argument to $\varphi^{-1}$, we can obtain the reverse inclusion and equality follows. 
\end{proof}

\begin{lemma}\label{lema4}
$\varphi$ is an almost additive map, that is, for every $a,b \in \R$, $\varphi(a+b) - \varphi(a) - \varphi(b) \in \mathcal{Z}(\R')$.
\end{lemma}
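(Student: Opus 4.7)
The plan is to recognize that this lemma is essentially a bookkeeping reduction to the already-established Ferreira--Guzzo almost-additivity theorem (Theorem \ref{FerGur}), after two preliminary facts are in hand. The only nontrivial thing to check is that the hypotheses required there are actually met here, where $\varphi$ is presented as a surjective idempotent-preserving Lie multiplicative map rather than as a bijective Lie multiplicative map satisfying the Peirce-commutator conditions $(\spadesuit), (\clubsuit)$ directly.

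First I would upgrade the surjectivity assumption to bijectivity by invoking Proposition \ref{prop4}: since $\R$ and $\R'$ are $2$-torsion free alternative and $\varphi$ is a surjective Lie multiplicative map preserving idempotents, that proposition guarantees $\varphi$ is injective, and hence bijective. This matches the standing hypothesis in Theorem \ref{FerGur} that the Lie multiplicative map be a bijection.

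Second I would verify the Peirce-commutator conditions $(i), (ii)$ required by Theorem \ref{FerGur}. By hypothesis $\R$ is a $2,3$-torsion free alternative ring satisfying conditions $(1), (2), (3)$ of Theorem \ref{mainthm}; Proposition \ref{prop2} shows that these three conditions together imply $(\spadesuit)$ and $(\clubsuit)$, i.e.\ that whenever $[a_{11}+a_{22}, \R_{12}] = 0$ or $[a_{11}+a_{22}, \R_{21}] = 0$ one has $a_{11}+a_{22} \in \mathcal{Z}(\R)$. These are precisely conditions $(i)$ and $(ii)$ appearing in Theorem \ref{FerGur}.

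Having assembled the two ingredients, I would then apply Theorem \ref{FerGur} directly: since $\varphi$ is a bijective Lie multiplicative map of the $2,3$-torsion free alternative ring $\R$ onto the alternative ring $\R'$, and $\R$ satisfies the required Peirce-commutator conditions, $\varphi$ is almost additive in the sense of Ferreira--Guzzo. This means exactly that $\varphi(a+b) - \varphi(a) - \varphi(b) \in \mathcal{Z}(\R')$ for all $a, b \in \R$, which is the conclusion of the lemma. There is no real obstacle here; the content of the lemma is that the idempotent-preservation hypothesis of Theorem \ref{mainthm} is strong enough to substitute for the injectivity hypothesis of Theorem \ref{FerGur}, and Proposition \ref{prop4} is exactly what bridges that gap.
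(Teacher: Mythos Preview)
Your proposal is correct and follows essentially the same approach as the paper: invoke Proposition~\ref{prop2} to obtain conditions $(\spadesuit)$ and $(\clubsuit)$ from $(1)$--$(3)$, then apply Theorem~\ref{FerGur}. You are in fact slightly more careful than the paper's own proof, which omits the explicit appeal to Proposition~\ref{prop4} for bijectivity even though Theorem~\ref{FerGur} requires it; the paper tacitly relies on this having been established earlier.
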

\begin{proof}
Since $\R$ be a $2,3$-torsion free alternative ring satisfying $(1)$, $(2)$, $(3)$, $\R$ satisfies 
$(\spadesuit)$ and $(\clubsuit)$ by Proposition \ref{prop2}. Now using Theorem \ref{FerGur} we get $\varphi$ is an almost additive map. 
\end{proof}

\begin{remark}\label{impobs}
The assumptions $(2)$ and $(3)$ imply 
\begin{enumerate}
\item [\it (2')] If $x'_{11}\R'_{12} = 0$ or $\R'_{21}x'_{11} = 0$ then $x'_{11} = 0;$
\item [\it (3')] If $\R'_{12}x'_{22} = 0$ or $x'_{22}\R'_{21} = 0$ then $x'_{22} = 0;$
\end{enumerate}
regarding Peirce decomposition relative to idempotent $f_1$. Indeed just use the bijection of $\varphi$. 
\end{remark}

\subsection{First part of Theorem \ref{mainthm}}

Throughout this subsection we assume that $(\dagger)$ holds and let us consider $e_{1}$ a nontrivial idempotent of $\mathfrak{R}$.

\begin{lemma}\label{lema3} $\varphi(\R_{ii}) \subseteq \R'_{ii} + \mathcal{Z}(\R') \ (i = 1,2)$
\end{lemma}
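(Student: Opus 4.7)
The plan is to exploit $[e_i,a_{ii}] = 0$ to force $\varphi(a_{ii})$ into the Peirce diagonal $\R'_{11}\oplus\R'_{22}$, and then use $(\dagger)$ to absorb the $\R'_{jj}$-component into $\mathcal{Z}(\R')$.

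Fix $a_{ii} \in \R_{ii}$. Since $[e_i,a_{ii}] = 0$ and $\varphi(0)=0$ by Proposition \ref{prop4}, the Lie multiplicativity of $\varphi$ gives $[f_i,\varphi(a_{ii})] = 0$. Write $\varphi(a_{ii}) = y_{11}+y_{12}+y_{21}+y_{22}$ in the Peirce decomposition of $\R'$ relative to $f_1$ (the case $i=2$ reduces to the case $i=1$ through $f_2 = 1_{\R'} - f_1$). A direct Peirce computation, using $f_1 y_{1k} = y_{1k}$, $y_{k1}f_1 = y_{k1}$ and $f_1 y_{2k} = 0$, $y_{k2}f_1 = 0$, yields
\[
[f_1,\varphi(a_{ii})] = y_{12} - y_{21}.
\]
Since the Peirce sum is direct, $y_{12} = y_{21} = 0$, hence $\varphi(a_{ii}) = y_{ii} + y_{jj}$ with $j\neq i$.

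Now invoke hypothesis $(\dagger)$ with the indices interchanged, i.e., $f_j\varphi(\R_{ii})f_j \subseteq \mathcal{Z}(\R')f_j$. Thus $y_{jj} = f_j\varphi(a_{ii})f_j = zf_j$ for some $z \in \mathcal{Z}(\R')$. Rewriting,
\[
\varphi(a_{ii}) \;=\; y_{ii} + zf_j \;=\; (y_{ii} - zf_i) + z,
\]
so the result follows once $zf_i \in \R'_{ii}$ is verified. This is the only non-associative step: flexibility $(f_i,z,f_i)=0$ together with centrality of $z$ gives $f_i\cdot zf_i = f_iz\cdot f_i = zf_i\cdot f_i$, and the alternative identity $(z,f_i,f_i)=0$ gives $zf_i\cdot f_i = z\cdot f_if_i = zf_i$; an entirely analogous calculation applied on the right yields $(zf_i)f_i = zf_i$, placing $zf_i$ in $f_i\R' f_i = \R'_{ii}$.

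The main obstacle, if one exists, is this small non-associative bookkeeping showing $zf_i \in \R'_{ii}$; after that, the conclusion is an immediate combination of the commutator computation with hypothesis $(\dagger)$.
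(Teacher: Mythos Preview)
Your argument is correct and follows essentially the same route as the paper: kill the off-diagonal Peirce components via $[\varphi(a_{ii}),f_1]=0$, then use $(\dagger)$ to write the $jj$-component as $zf_j$ and absorb it into $\mathcal{Z}(\R')$. The paper simply writes $\varphi(a_{11})=b_{11}+zf_2=b_{11}-f_1z+z$ without further comment, whereas you spell out why $zf_i\in\R'_{ii}$ in the alternative setting; note that your closing sentence about ``an entirely analogous calculation on the right'' is redundant, since $(zf_i)f_i=zf_i$ is exactly what you derived in the line before.
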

\begin{proof}
We show just the case $i = 1$ because the other case can be treated similarly. For every $a_{11} \in \R_{11}$, with $\varphi(a_{11}) = b_{11} + b_{12} + b_{21} + b_{22}$ we get
$$0 = \varphi([a_{11}, e_1]) = [\varphi(a_{11}),f_1].$$
From this $b_{12} = b_{21} = 0$. By Theorem \ref{mainthm} item $(\dagger)$, we have
$$\varphi(a_{11}) = b_{11} + f_2\varphi(a_{11})f_2 = b_{11} + zf_2 = b_{11} -f_1z + z \in \R'_{11} + \mathcal{Z}(\R').$$
\end{proof}

\vspace{.1in}

Now let us define the mappings $\psi$ and $\tau$. By Lemmas \ref{lema2} and \ref{lema3} we have that

\begin{enumerate}
\item[\it (A)] if $a_{ij} \in \R_{ij}$, $i \neq j$, then $\varphi(a_{ij}) = b_{ij} \in \R'_{ij}$,
\item[\it (B)] if $a_{ii} \in \R_{ii}$, then $\varphi(a_{ii}) = b_{ii} + z, b_{ii} \in \R'_{ii}$, $z \in \mathcal{Z}(\R')$.
 \end{enumerate} 
We note that in $(B)$, $b_{ii}$ and $z$ are uniquely determined.  Now we define a map $\psi$ of $\R$ into $\R'$ according to the rule $\psi(a_{ij}) = b_{ij}, a_{ij} \in \R_{ij}$. For every $a = a_{11} + a_{12} + a_{21} + a_{22} \in \R$, define $\psi(a) = \sum \psi(a_{ij})$. A map $\tau$ of $\R$ into $\mathcal{Z}(\R')$ is then defined by
\begin{eqnarray*}
\tau(a) &=& \varphi(a) - \psi(a) =\\
&& \varphi(a) - (\psi(a_{11}) + \psi(a_{12}) + \psi(a_{21}) + \psi(a_{22})) =\\
&& \varphi(a) - (b_{11} + b_{12} + b_{21} + b_{22}) =\\
&& \varphi(a) - (\varphi(a_{11}) - z_{a_{11}} + \varphi(a_{12}) + \varphi(a_{21}) + \varphi(a_{22}) - z_{a_{22}}) =\\
&& 
\varphi(a) - (\varphi(a_{11}) + \varphi(a_{12}) + \varphi(a_{21}) + \varphi(a_{22}))  + (z_{a_{11}} + z_{a_{22}}). 
\end{eqnarray*}
We remark that $\psi(x) \in \mathcal{Z}(\R')$, if and only if $x \in \mathcal{Z}(\R)$. Now we need to prove that $\psi$ and $\tau$ are desired maps. 

\begin{lemma}\label{lema5}
$\psi$ is an additive map.
\end{lemma}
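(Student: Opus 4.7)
The plan is to reduce the additivity of $\psi$ on all of $\R$ to its additivity on each individual Peirce component. Because the Peirce decomposition is a direct-sum decomposition, for $a=\sum a_{ij}$ and $b=\sum b_{ij}$ the element $a+b$ has Peirce components $a_{ij}+b_{ij}\in\R_{ij}$, so from the very definition $\psi(a)=\sum\psi(a_{ij})$ it is immediate that $\psi(a+b)=\psi(a)+\psi(b)$ as soon as we know $\psi(x_{ij}+y_{ij})=\psi(x_{ij})+\psi(y_{ij})$ for every pair $x_{ij},y_{ij}\in\R_{ij}$.

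The bridge between almost-additivity (Lemma \ref{lema4}) and honest additivity is the auxiliary fact
\[
\R'_{ij}\cap\mathcal{Z}(\R')=0 \qquad (i,j=1,2),
\]
which I would prove first. For $i\neq j$, a central $c=f_i c f_j$ would have to satisfy $f_i c=c f_i$; the Peirce multiplication rules (i) and (iii) combined with the alternative identity $(f_i,f_i,cf_j)=0$ give $f_i c=c$ while $cf_i\in\R'_{ij}\R'_{ii}=0$, forcing $c=0$. For $i=j$, a central $c\in\R'_{ii}$ satisfies $r_{ij}c=cr_{ij}$ for every $r_{ij}\in\R'_{ij}$ with $i\neq j$; but $r_{ij}c\in\R'_{ij}\R'_{ii}=0$, so $c\R'_{ij}=0$, and the condition $(2')$ or $(3')$ of Remark \ref{impobs} gives $c=0$.

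With this auxiliary fact in hand the two remaining checks are short. On $\R_{ij}$ with $i\neq j$, Lemma \ref{lema2} places $\varphi(x_{ij}+y_{ij}),\varphi(x_{ij}),\varphi(y_{ij})$ all in $\R'_{ij}$, so the central error term $z=\varphi(x_{ij}+y_{ij})-\varphi(x_{ij})-\varphi(y_{ij})$ supplied by Lemma \ref{lema4} lies in $\R'_{ij}\cap\mathcal{Z}(\R')=0$, and since $\psi$ agrees with $\varphi$ on $\R_{ij}$ the additivity is immediate. On $\R_{ii}$, writing $\varphi(x_{ii})=\psi(x_{ii})+z_{x_{ii}}$ with $\psi(x_{ii})\in\R'_{ii}$ and $z_{x_{ii}}\in\mathcal{Z}(\R')$, the almost-additivity identity rearranges into
\[
\psi(x_{ii}+y_{ii})-\psi(x_{ii})-\psi(y_{ii}) = z_{x_{ii}}+z_{y_{ii}}+z-z_{x_{ii}+y_{ii}},
\]
whose left-hand side belongs to $\R'_{ii}$ and whose right-hand side belongs to $\mathcal{Z}(\R')$; the auxiliary fact again forces both sides to vanish.

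The main obstacle is the auxiliary lemma $\R'_{ij}\cap\mathcal{Z}(\R')=0$, which is where one has to be careful because the ring is only alternative, not associative: the proof uses the Peirce relations (i)–(iv) together with the alternative identity $(e_i,e_i,a)=0$ and the hypotheses $(2')$, $(3')$ carried over from $\R$ to $\R'$ via Remark \ref{impobs}. Once this fact is established, the additivity of $\psi$ is simply a matter of matching up Peirce components.
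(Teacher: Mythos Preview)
Your argument is correct and follows essentially the same route as the paper: reduce to each Peirce component and use that the additivity defect lands in $\R'_{ij}\cap\mathcal{Z}(\R')=\{0\}$. The only minor difference is that for $i\neq j$ the paper invokes Lemma~3.3 of \cite{posd} to get genuine additivity of $\varphi$ on $\R_{ij}$ directly, whereas you instead recover it from almost-additivity together with your auxiliary fact; and the paper simply asserts $\mathcal{Z}(\R')\cap\R'_{ii}=\{0\}$ without the justification you supply.
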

\begin{proof}
We only need to show that $\psi$ is an additive on $\R_{ii}$ because by Lemma $3.3$ in \cite{posd} we have $\varphi(a_{ij} + b_{ij}) = \varphi(a_{ij}) + \varphi(b_{ij})$, $i \neq j$. Let $a_{ii}, b_{ii} \in \R_{ii}$,
\begin{eqnarray*}
\psi(a_{ii} + b_{ii}) - \psi(a_{ii}) - \psi(b_{ii}) &=& \varphi(a_{ii} + b_{ii}) - \tau(a_{ii} + b_{ii}) - \varphi(a_{ii}) \\&+& \tau(a_{ii}) - \varphi(b_{ii}) + \tau(b_{ii}).
\end{eqnarray*} 
Thus, $\psi(a_{ii} + b_{ii}) - \psi(a_{ii}) - \psi(b_{ii}) \in \mathcal{Z}(\R') \cap \R'_{ii} = \left\{0\right\}$.
\end{proof}
Now we show that $\psi(ab) = \psi(a)\psi(b)$ for all $a, b \in \R$.

\begin{lemma}\label{lema6}
For every $a_{ii}, b_{ii} \in \R_{ii}$, $a_{ij}, b_{ij} \in \R_{ij}$, $b_{ji} \in \R_{ji}$ and $b_{jj} \in \R_{jj}$ with $i \neq j$ we have
\begin{enumerate}
\item[\it (I)] $\psi(a_{ii}b_{ij}) = \psi(a_{ii})\psi(b_{ij})$,
\item[\it (II)] $\psi(a_{ij}b_{jj}) = \psi(a_{ij})\psi(b_{jj})$,
\item[\it (III)] $\psi(a_{ii}b_{ii}) = \psi(a_{ii})\psi(b_{ii})$,
\item[\it (IV)] $\psi(a_{ij}b_{ij}) = \psi(a_{ij})\psi(b_{ij})$,
\item[\it (V)] $\psi(a_{ij}b_{ji}) = \psi(a_{ij})\psi(b_{ji}).$
\end{enumerate}
\end{lemma}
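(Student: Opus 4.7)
The plan is to establish the five multiplicative identities of Lemma \ref{lema6} in increasing order of difficulty, each by reducing to a commutator computation in the Peirce decomposition. For (I) and (II) the Lie brackets collapse to one-sided products because $b_{ij}a_{ii}\in\R_{ij}\R_{ii}=0$ and $b_{jj}a_{ij}\in\R_{jj}\R_{ij}=0$ by Peirce (iii); hence $[a_{ii},b_{ij}]=a_{ii}b_{ij}$ and $[a_{ij},b_{jj}]=a_{ij}b_{jj}$. Applying $\varphi$, using that the central summand of $\varphi(a_{ii})=\psi(a_{ii})+z$ drops out of any Lie bracket, and invoking the analogous Peirce collapse in $\R'$, yields (I) and (II). For (IV), property (iv.b) gives $[a_{ij},b_{ij}]=2a_{ij}b_{ij}$ and likewise $[\psi(a_{ij}),\psi(b_{ij})]=2\psi(a_{ij})\psi(b_{ij})$; since $a_{ij}b_{ij}\in\R_{ji}$ so that $\varphi(a_{ij}b_{ij})=\psi(a_{ij}b_{ij})$, the $\mathbb{Q}$-linearity of $\varphi$ (Proposition \ref{prop4}) together with $2$-torsion freeness cancels the factor of~$2$ and produces (IV).

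For (III), the plan is to show that $D:=\psi(a_{ii}b_{ii})-\psi(a_{ii})\psi(b_{ii})\in\R'_{ii}$ commutes with every element of $\R'_{ij}$, $i\neq j$. Testing against $\psi(c_{ij})$ and using (I), the task reduces to the identity $\psi((a_{ii}b_{ii})c_{ij})=(\psi(a_{ii})\psi(b_{ii}))\psi(c_{ij})$, that is, to the vanishing of the associator $(x_{ii},y_{ii},z_{ij})$ in both $\R$ and $\R'$. Alternating the associator gives $(x_{ii},y_{ii},z_{ij})=-(z_{ij},y_{ii},x_{ii})$, and the right side vanishes because $z_{ij}y_{ii}\in\R_{ij}\R_{ii}=0$ and $z_{ij}(y_{ii}x_{ii})\in\R_{ij}\R_{ii}=0$. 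Proposition \ref{prop2}~$(\spadesuit)$, transferred to $\R'$ through Remark \ref{impobs}, then places $D$ in $\mathcal{Z}(\R')$; and since any central element of $\R'_{ii}$ annihilates $\R'_{ij}$ on the left and so vanishes by~(2$'$), we obtain $D=0$. The case $i=2$ uses $(\clubsuit)$ with $\R'_{ji}$ and~(3$'$) symmetrically.

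The main obstacle is (V), because the associators $(\R_{ij},\R_{ji},\R_{ij})$ do not vanish in an alternative ring. The plan is to show first that $\psi$ preserves this associator: the alternative-algebra Jacobi--associator identity $6(x,y,z)=[[x,y],z]+[[y,z],x]+[[z,x],y]$ applied to $x=\psi(a_{ij})$, $y=\psi(b_{ji})$, $z=\psi(c_{ij})$, combined with Lie multiplicativity and almost additivity of $\varphi$, produces
$$6\bigl[(\psi(a_{ij}),\psi(b_{ji}),\psi(c_{ij}))-\psi((a_{ij},b_{ji},c_{ij}))\bigr]\in \R'_{ij}\cap\mathcal{Z}(\R')=\{0\},$$
and $2,3$-torsion freeness of $\R'$ forces the associator-preservation. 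Substituting this into the expansion $(a_{ij}b_{ji})c_{ij}=a_{ij}(b_{ji}c_{ij})+(a_{ij},b_{ji},c_{ij})$ and invoking (I), (II), one arrives at the cocycle-type relation $E(a_{ij},b_{ji})\,\psi(c_{ij})=\psi(a_{ij})\,E(b_{ji},c_{ij})$, where $E(p,q):=\psi(pq)-\psi(p)\psi(q)$ denotes the (V)-discrepancy. The final and hardest step is to conclude $E\equiv 0$ from this coupling; the intended route is to exploit flexibility in $\R'$, namely $(\psi(a)E(b,c))\psi(a)=\psi(a)(E(b,c)\psi(a))=0$ because $E(b,c)\psi(a)\in\R'_{jj}\R'_{ij}=0$, together with the auxiliary vanishing $(\R'_{ii},\R'_{ij},\R'_{ji})=0$ (established by the same alternating-plus-Peirce argument used in (III)), in order to transform the cocycle relation into annihilator conditions against $\R'_{ij}$ and $\R'_{ji}$, from which the hypotheses~(2$'$) and~(3$'$) inherited from~(2), (3) force $E=0$. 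This last annihilator extraction is the principal difficulty; once it is carried out, the five identities are complete.
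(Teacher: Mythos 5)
Your treatment of (I), (II) and (IV) is correct and is essentially the paper's own argument (commutator collapse via the Peirce relations, plus $\mathbb{Q}$-homogeneity and $2$-torsion freeness). Your (III) also reaches the right annihilator condition $\bigl(\psi(a_{ii}b_{ii})-\psi(a_{ii})\psi(b_{ii})\bigr)\R'_{ij}=0$, which with $(2')$, $(3')$ and the surjectivity $\psi(\R_{ij})=\R'_{ij}$ finishes exactly as in the paper; the detour through $(\spadesuit)$ in $\R'$ is superfluous and in fact not justified as stated, since Remark \ref{impobs} transfers only $(2)$ and $(3)$ to $\R'$, not $(1)$, so Proposition \ref{prop2} cannot be invoked for $\R'$ --- but since the annihilation is already in hand, this is harmless.

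The genuine gap is in (V), and you flag it yourself: after deriving associator preservation and the coupled relation $E(a_{ij},b_{ji})\psi(c_{ij})=\psi(a_{ij})E(b_{ji},c_{ij})$, the ``annihilator extraction'' that is supposed to force $E\equiv 0$ is never carried out, and the hints offered do not close it: flexibility only yields $\bigl(\psi(a)E(b,c)\bigr)\psi(a)=0$, which says nothing about $E(a_{ij},b_{ji})\R'_{ij}$ because the right-hand side of the coupling involves the unknown discrepancy $E(b_{ji},c_{ij})$ --- the very quantity you are trying to kill, with the roles of $i$ and $j$ interchanged. Note also that your sketch nowhere uses hypothesis $(4)$, whereas $(4)$ is precisely the hypothesis the paper needs, and uses only, at this point: the paper's proof of (V) writes the two discrepancies as differing by a central element $z'=\tau([a_{ij},b_{ji}])$ coming from almost additivity, proves $\psi(a_{ij}b_{ji}a_{ij})=\psi(a_{ij})\psi(b_{ji})\psi(a_{ij})$ from $[[a_{ij},b_{ji}],a_{ij}]=2a_{ij}b_{ji}a_{ij}$ to get $\psi(a_{ij})z'=0$, and then uses $(4)$ (an element $h$ with $zh=e_1+e_2$ for the corresponding nonzero central $z$ of $\R$) to force $a_{ij}=0$, a contradiction, so $z'=0$ and the two Peirce components of the discrepancy vanish separately. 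Without an argument of this kind (or some other input playing the role of $(4)$), your cocycle relation by itself does not yield (V), so the proof is incomplete at its hardest point.
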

\begin{proof}
Let us start with $(I)$
\begin{eqnarray*}
\psi(a_{ii}b_{ij}) &=& \varphi(a_{ii}b_{ij})= \varphi([a_{ii}, b_{ij}])=[\varphi(a_{ii}), \varphi(b_{ij})]= \\
&& [\psi(a_{ii}), \psi(b_{ij})] = \psi(a_{ii})\psi(b_{ij}).
\end{eqnarray*} 
Next $(II)$
\begin{eqnarray*}
\psi(a_{ij}b_{jj}) &=& \varphi(a_{ij}b_{jj})= \varphi([a_{ij}, b_{jj}])=[\varphi(a_{ij}), \varphi(b_{jj})]= \\&& [\psi(a_{ij}), \psi(b_{jj})]= \psi(a_{ij})\psi(b_{jj}).
\end{eqnarray*} 
Now we show $(III)$. By $(I)$ we get
$$\psi((a_{ii}b_{ii})r_{ij}) = \psi(a_{ii}b_{ii})\psi(r_{ij}).$$ 
On the other hand,
\begin{eqnarray*}
\psi(a_{ii}(b_{ii}r_{ij})) &=& \psi(a_{ii})\psi(b_{ii}r_{ij}) = \psi(a_{ii})(\psi(b_{ii})\psi(r_{ij})).
\end{eqnarray*}
As $(a_{ii}b_{ii})r_{ij} = a_{ii}(b_{ii}r_{ij})$ and $(\psi(a_{ii})\psi(b_{ii}))\psi(r_{ij}) = \psi(a_{ii})(\psi(b_{ii})\psi(r_{ij}))$ we obtain
$$(\psi(a_{ii}b_{ii}) - \psi(a_{ii})\psi(b_{ii}))\psi(r_{ij}) = 0$$
for all $\psi(r_{ij}) \in \R'_{ij}$. So $\psi(a_{ii}b_{ii}) = \psi(a_{ii})\psi(b_{ii})$ by Remark \ref{impobs}.

Next ($IV$). 
\begin{eqnarray*}
2\psi(a_{ij}b_{ij}) &=& \psi(2a_{ij}b_{ij}) = \varphi(2a_{ij}b_{ij}) =\varphi([a_{ij},b_{ij}]) =  [\varphi(a_{ij}), \varphi(b_{ij})] = \\&& [\psi(a_{ij}), \psi(b_{ij})] = \psi(a_{ij})\psi(b_{ij}) - \psi(b_{ij})\psi(a_{ij}) = 2\psi(a_{ij})\psi(b_{ij}) 
\end{eqnarray*}
As $\R'$ is $2$- torsion free it is follow that $\psi(a_{ij}b_{ij}) = \psi(a_{ij})\psi(b_{ij})$. 
And finally we show $(V)$. We have
\begin{eqnarray*}
\tau([a_{ij}, b_{ji}]) &=& \varphi([a_{ij}, b_{ji}]) - \psi([a_{ij}, b_{ji}]) = [\varphi(a_{ij}), \varphi(b_{ji})] - \psi(a_{ij}b_{ji} - b_{ji}a_{ij})= \\&& [\psi(a_{ij}), \psi(b_{ji})] - \psi(a_{ij} b_{ji}) + \psi(b_{ji}a_{ij}) = \\&& 
\psi(a_{ij})\psi(b_{ji}) - \psi(b_{ji})\psi(a_{ij}) - \psi(a_{ij}b_{ji}) + \psi(b_{ji}a_{ij}), 
\end{eqnarray*}
which implies 
$$[\psi(a_{ij})\psi(b_{ji})- \psi(a_{ij}b_{ji}) ] + [\psi(b_{ji}a_{ij})- \psi(b_{ji})\psi(a_{ij})] = z' \in \mathcal{Z}(\R').$$
If $z'= 0$ then $\psi(a_{ij}b_{ji}) = \psi(a_{ij})\psi(b_{ji}).$
If $z' \neq 0$ we multiply by $\psi(a_{ij})$ we get
$$\psi(a_{ij})\psi(b_{ji}a_{ij}) - \psi(a_{ij})\psi(b_{ji})\psi(a_{ij}) = \psi(a_{ij})z'.$$
By $(II)$ we have
\begin{eqnarray}\label{dif}
\psi(a_{ij}b_{ji}a_{ij}) - \psi(a_{ij})\psi(b_{ji})\psi(a_{ij}) = \psi(a_{ij})z'.
\end{eqnarray}
Now we observe that $\psi(a_{ij}b_{ji}a_{ij}) = \psi(a_{ij})\psi(b_{ji})\psi(a_{ij})$. In deed, observe that $[[a_{ij}, b_{ji}],a_{ij}] = 2a_{ij}b_{ji}a_{ij}$. Then
\begin{eqnarray*}
2\psi(a_{ij}b_{ji}a_{ij}) &=& \psi(2a_{ij}b_{ji}a_{ij}) = \varphi([[a_{ij}, b_{ji}],a_{ij}])=
 [[\varphi(a_{ij}), \varphi(b_{ji})],\varphi(a_{ij})] = \\&&
 [[\psi(a_{ij}), \psi(b_{ji})],\psi(a_{ij})]= 2 \psi(a_{ij})\psi(b_{ji})\psi(a_{ij})
\end{eqnarray*}
Since $\R'$ is $2$-torsion free we get $\psi(a_{ij}b_{ji}a_{ij}) = \psi(a_{ij})\psi(b_{ji})\psi(a_{ij})$.
So $\psi(a_{ij})z' = 0$ that implies $a_{ij}z = 0$ with $z \in \mathcal{Z}(\R).$ 
Now, we will consider the case $i=1$ and $j = 2,$ the other case is similar.
By $(4)$  there exists $h \in \R$ such that $zh = e_1 + e_2.$
Hence,  
\begin{eqnarray*}a_{12}(z_{22}h) &=& a_{12}(z_{22}h_{21}) + a_{12}(z_{22}h_{22}) = \\
&&(a_{12}z_{22})h_{21} + (a_{12}z_{22})h_{22} = (a_{12}z)h_{21} + (a_{12}z)h_{22} = 0.
\end{eqnarray*}
Now, 
$z_{22}h = 1_{\R} - z_{11}h$ gives 
$a_{12}(z_{22}h) = a_{12} - a_{12}(z_{11}h),$ and follows that $a_{12} = 0.$
Summarizing with the other case, we have
 $a_{ij} = 0$ which is a contradiction. Therefore $\psi(a_{ij}b_{ji}) = \psi(a_{ij})\psi(b_{ji}).$
\end{proof}

\begin{lemma}\label{lema7}
$\psi$ is a homomorphism.
\end{lemma}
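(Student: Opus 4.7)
The plan is to reduce the claim $\psi(ab)=\psi(a)\psi(b)$ for all $a,b\in\R$ to the five component-wise identities already proved in Lemma \ref{lema6}, using the additivity of $\psi$ from Lemma \ref{lema5} together with the Peirce multiplication rules in both $\R$ and $\R'$.

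First, write $a=a_{11}+a_{12}+a_{21}+a_{22}$ and $b=b_{11}+b_{12}+b_{21}+b_{22}$ in their Peirce components. Expanding $ab$ distributively gives $ab=\sum_{i,j,k,l} a_{ij}b_{kl}$, where each summand lies in a Peirce component of $\R$ (or is zero). Applying additivity of $\psi$ yields $\psi(ab)=\sum_{i,j,k,l}\psi(a_{ij}b_{kl})$; similarly $\psi(a)\psi(b)=\sum_{i,j,k,l}\psi(a_{ij})\psi(b_{kl})$, noting that $\psi(a_{ij})\in\R'_{ij}$ by the construction of $\psi$ (Lemma \ref{lema2} and Lemma \ref{lema3}), so the Peirce rules in $\R'$ apply term by term.

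Next, I would go through the sixteen index combinations. The nonzero cases split exactly as in Lemma \ref{lema6}: products of type $\R_{ii}\R_{ii}$ fall under (III); $\R_{ii}\R_{ij}$ under (I); $\R_{ij}\R_{jj}$ under (II); $\R_{ij}\R_{ji}$ under (V); and $\R_{ij}\R_{ij}\subseteq\R_{ji}$ under (IV). For the remaining combinations, rule (iii) of the Peirce decomposition forces $a_{ij}b_{kl}=0$ in $\R$ (so $\psi(a_{ij}b_{kl})=\psi(0)=0$, since $\psi$ is additive), and the identical Peirce rule in $\R'$ forces $\psi(a_{ij})\psi(b_{kl})=0$ as well. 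Hence each of the sixteen terms matches, and summation gives $\psi(ab)=\psi(a)\psi(b)$.

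There is no deep obstacle here; Lemma \ref{lema6} already carries the algebraic content, and Lemma \ref{lema5} makes the index-by-index reduction legitimate. The only thing to be careful about is the bookkeeping — in particular ensuring that $\psi$ vanishes on the zero summands (which is immediate from additivity of $\psi$ and $\psi(0)=0$) and that $\R'_{ij}\R'_{kl}=0$ whenever $\R_{ij}\R_{kl}=0$, which follows from the fact that $f_1=\varphi(e_1)$ is a nontrivial idempotent and therefore induces a genuine Peirce decomposition of $\R'$ satisfying the same multiplicative relations (i)--(iv).
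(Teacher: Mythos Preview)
Your proposal is correct and follows essentially the same approach as the paper: expand $a$ and $b$ into Peirce components, apply the additivity of $\psi$ from Lemma \ref{lema5}, invoke Lemma \ref{lema6} on each of the ten nonzero products $a_{ij}b_{kl}$, and note that the six remaining products vanish on both sides by the Peirce relations in $\R$ and $\R'$. The paper's argument is terser (it simply lists the ten nonzero summands and cites Lemmas \ref{lema5} and \ref{lema6}), while you have spelled out the bookkeeping for the zero terms, but there is no substantive difference.
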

\begin{comment}
\begin{proof}
Let be $a, b \in \R$. We have
\begin{eqnarray*}
\psi(ab) &=& \psi((a_{11} + a_{12} + a_{21} + a_{22})(b_{11} + b_{12} + b_{21} + b_{22})) \\&=& \psi(a_{11}b_{11}) + \psi(a_{11}b_{12}) + \psi(a_{12}b_{12}) + \psi(a_{12}b_{21}) + \psi(a_{12}b_{22}) \\&+& \psi(a_{21}b_{11}) + \psi(a_{21}b_{12}) + \psi(a_{21}b_{21}) + \psi(a_{22}b_{21}) + \psi(a_{22}b_{22}) \\&=& 
\psi(a)\psi(b)
\end{eqnarray*}
by Lemmas \ref{lema5} and \ref{lema6}.
\end{proof}
\end{comment}

\begin{lemma}\label{lema8}
$\tau$ sends the commutators into zero.
\end{lemma}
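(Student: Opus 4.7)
The plan is to compute $\tau([a,b])$ directly using the decomposition $\varphi = \psi + \tau$ together with the properties established in the preceding lemmas. Specifically, by the construction of $\tau$, every value $\tau(x)$ lies in $\mathcal{Z}(\R')$, and by Lemmas \ref{lema5}--\ref{lema7}, $\psi$ is additive and multiplicative, hence a ring homomorphism.

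Fix $a, b \in \R$. Since $\varphi$ is Lie multiplicative, we have
\begin{eqnarray*}
\tau([a,b]) &=& \varphi([a,b]) - \psi([a,b]) \\
&=& [\varphi(a), \varphi(b)] - \psi(ab - ba) \\
&=& [\psi(a) + \tau(a), \psi(b) + \tau(b)] - \psi(ab) + \psi(ba).
\end{eqnarray*}
Because $\tau(a), \tau(b) \in \mathcal{Z}(\R')$, the only surviving term in the bracket is $[\psi(a), \psi(b)] = \psi(a)\psi(b) - \psi(b)\psi(a)$. Applying the homomorphism property of $\psi$ (Lemma \ref{lema7}), this equals $\psi(ab) - \psi(ba)$, which cancels the remaining terms and yields $\tau([a,b]) = 0$.

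There is no real obstacle here, since all the heavy lifting has already been done in Lemmas \ref{lema5}, \ref{lema6}, and \ref{lema7}: once $\psi$ is known to be an additive homomorphism and $\tau$ takes values in the centre, the computation is immediate. The only small point to double-check is that the additivity of $\psi$ gives $\psi(ab - ba) = \psi(ab) - \psi(ba)$, which follows from $\psi(0) = 0$ and additivity.
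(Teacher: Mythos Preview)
Your proof is correct and follows essentially the same route as the paper: use $\tau = \varphi - \psi$, apply the Lie multiplicativity of $\varphi$, drop the central $\tau$-terms from the bracket, and invoke the fact that $\psi$ is an additive homomorphism (Lemmas \ref{lema5}--\ref{lema7}) to conclude $\psi([a,b]) = [\psi(a),\psi(b)]$. The paper's version is terser but identical in substance.
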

\begin{proof}
\begin{eqnarray*}
\tau([a, b]) &=& \varphi([a, b]) - \psi([a, b])= [\varphi(a),\varphi(b)] - \psi([a, b])=\\
&& [\psi(a) ,\psi(b)] - \psi([a, b])= 0.
\end{eqnarray*}
\end{proof}
The first part of proof of the Theorem \ref{mainthm} is completed.

\subsection{Second part of Theorem \ref{mainthm}}

Now throughout this subsection we assume that $(\dagger \dagger)$ holds and also let us consider $e_{1}$ a nontrivial idempotent of $\mathfrak{R}$.

\begin{lemma}\label{lema32} $\varphi(\R_{ii}) \subseteq \R'_{jj} + \mathcal{Z}(\R') \ (i \neq j)$
\end{lemma}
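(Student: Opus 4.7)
The plan is to imitate the proof of Lemma \ref{lema3} step by step, swapping the role of the hypothesis $(\dagger)$ for $(\dagger\dagger)$, so that the central piece of $\varphi(a_{ii})$ is extracted from the $(i,i)$-Peirce block rather than the $(j,j)$-block. By symmetry it suffices to treat the case $i=1$, $j=2$; the case $i=2$, $j=1$ is entirely parallel.

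First, I would pick $a_{11}\in\R_{11}$ and decompose $\varphi(a_{11}) = b_{11} + b_{12} + b_{21} + b_{22}$ via the Peirce decomposition of $\R'$ associated with $f_1$ (which is legitimate by Remark \ref{obs1}). Since $[a_{11},e_1]=0$, Lie multiplicativity gives
\[
0 = \varphi([a_{11},e_1]) = [\varphi(a_{11}), f_1] = b_{12} - b_{21},
\]
and comparing Peirce components forces $b_{12}=b_{21}=0$. Hence $\varphi(a_{11}) = b_{11}+b_{22}$.

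The key step is now to invoke $(\dagger\dagger)$ with $i=1$, rather than $(\dagger)$ with $i=2$. This yields
\[
b_{11} = f_{1}\varphi(a_{11})f_{1} \in \mathcal{Z}(\R')\, f_{1},
\]
so there exists $z\in\mathcal{Z}(\R')$ with $b_{11}=zf_1$. Using $1_{\R'}=f_1+f_2$ and the centrality of $z$, rewrite $zf_1 = z - zf_2$, so that
\[
\varphi(a_{11}) \;=\; zf_1 + b_{22} \;=\; (b_{22} - zf_2) + z.
\]
Since $zf_2 \in \R'_{22}$ (centrality of $z$ combined with $f_2^2=f_2$), the element $b_{22}-zf_2$ lies in $\R'_{22}$, and hence $\varphi(a_{11}) \in \R'_{22}+\mathcal{Z}(\R')$.

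I expect no serious obstacle here: the argument is a direct transposition of the proof of Lemma \ref{lema3}, and the only mild bookkeeping point is the rewrite $zf_1 = z - zf_2$, which is what shifts the ``diagonal part'' of $\varphi(a_{11})$ from the $(1,1)$-block to the $(2,2)$-block modulo center. Applying the same procedure to $a_{22}\in\R_{22}$ (using $(\dagger\dagger)$ with $i=2$) gives $\varphi(a_{22})\in\R'_{11}+\mathcal{Z}(\R')$, completing both cases.
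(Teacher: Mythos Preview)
Your proof is correct and follows essentially the same route as the paper's own argument: kill the off-diagonal Peirce components via $[\varphi(a_{11}),f_1]=0$, then apply $(\dagger\dagger)$ to replace $b_{11}$ by $zf_1$ and rewrite $zf_1=z-zf_2$. One harmless slip: the commutator $[\varphi(a_{11}),f_1]$ computes to $b_{21}-b_{12}$ rather than $b_{12}-b_{21}$, but this does not affect the conclusion $b_{12}=b_{21}=0$.
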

\begin{proof}
We show just the case $i = 1$ and $j =2$ because the other case can be treated similarly. For every $a_{11} \in \R_{11}$, with $\varphi(a_{11}) = b_{11} + b_{12} + b_{21} + b_{22}$ we get
$$0 = \varphi([a_{11}, e_1]) = [\varphi(a_{11}),f_1].$$
From this $b_{12} = b_{21} = 0$. By Theorem \ref{mainthm} item $(\dagger \dagger)$, we have
$$\varphi(a_{11}) = f_1\varphi(a_{11})f_1 + b_{22} = zf_1 + b_{22} = b_{22} -zf_2 + z \in \R'_{22} + \mathcal{Z}(\R').$$
\end{proof}

Let us define the mappings $\psi$ and $\tau$. By Lemmas \ref{lema2} and \ref{lema32} we have that

\begin{enumerate}
\item[\it $(A')$] if $a_{ij} \in \R_{ij}$, $i \neq j$, then $\varphi(a_{ij}) = b_{ij} \in \R'_{ij}$,
\item[\it $(B')$] if $a_{ii} \in \R_{ii}$, then $\varphi(a_{ii}) = b_{jj} + z, b_{jj} \in \R'_{jj}$, $z \in \mathcal{Z}(\R')$.
 \end{enumerate} 
We again observe that in $(B')$, $b_{jj}$ and $z$ are uniquely determined. Now we define a map $\psi$ of $\R$ into $\R'$ according to the rule $\psi(a_{ij}) = b_{ij}, a_{ij} \in \R_{ij}$ and $\psi(a_{ii}) = b_{jj}$, $a_{ii} \in \R_{ii}$ with $i \neq j$. For every $a = a_{11} + a_{12} + a_{21} + a_{22} \in \R$, define $\psi(a) = \sum \psi(a_{ij})$. A map $\tau$ of $\R$ into $\mathcal{Z}(\R')$ is then defined by
\begin{eqnarray*}
\tau(a) = \varphi(a) - \psi(a).
\end{eqnarray*}
We again remark that $\psi(x) \in \mathcal{Z}(\R')$, if and only if $x \in \mathcal{Z}(\R)$. Now we need to prove that $\psi$ and $\tau$ are desired maps.

\begin{lemma}\label{lema52}
$\psi$ is an additive map.
\end{lemma}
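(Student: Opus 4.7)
The plan is to mimic the proof of Lemma \ref{lema5} verbatim, with $\R'_{ii}$ replaced by $\R'_{jj}$ throughout, since under hypothesis $(\dagger\dagger)$ the map $\psi$ now sends $\R_{ii}$ into $\R'_{jj}$ rather than into $\R'_{ii}$.

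First I would dispose of the off-diagonal components: for $i \neq j$ we have $\psi|_{\R_{ij}} = \varphi|_{\R_{ij}}$ by definition, so Lemma 3.3 of \cite{posd} (the off-diagonal additivity of $\varphi$, whose proof uses only the Peirce relations together with almost additivity) immediately gives $\psi(a_{ij} + b_{ij}) = \psi(a_{ij}) + \psi(b_{ij})$. For the diagonal case I would take $a_{ii}, b_{ii} \in \R_{ii}$ and use the same expansion as before:
\begin{eqnarray*}
\psi(a_{ii}+b_{ii}) - \psi(a_{ii}) - \psi(b_{ii}) &=& \bigl[\varphi(a_{ii}+b_{ii}) - \varphi(a_{ii}) - \varphi(b_{ii})\bigr] \\
&& - \bigl[\tau(a_{ii}+b_{ii}) - \tau(a_{ii}) - \tau(b_{ii})\bigr],
\end{eqnarray*}
which is central by Lemma \ref{lema4} and the definition of $\tau$. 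At the same time the left-hand side lies in $\R'_{jj}$ by the construction of $\psi$ under $(\dagger\dagger)$, so the difference belongs to $\mathcal{Z}(\R') \cap \R'_{jj}$.

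The one genuinely new point — and essentially the only place where this argument differs from Lemma \ref{lema5} — is to verify that this intersection is zero. Let $z \in \R'_{jj}$ be central. The Peirce rule $(iii)$ of Section 1 gives $\R'_{ji}\R'_{jj} = 0$, so $[z, x'_{ji}] = z\, x'_{ji}$ for every $x'_{ji} \in \R'_{ji}$; centrality then forces $z\R'_{ji} = 0$, and hypothesis $(2')$ or $(3')$ of Remark \ref{impobs} (according to whether $j = 1$ or $j = 2$) yields $z = 0$. Combined with the off-diagonal case and the defining formula $\psi(a) = \sum \psi(a_{ij})$ applied to the decomposition $\R = \R_{11} \oplus \R_{12} \oplus \R_{21} \oplus \R_{22}$, this establishes additivity of $\psi$ on all of $\R$. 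I do not anticipate any serious obstacle: the statement is a mirror image of Lemma \ref{lema5}, and the only truly new micro-step is the small centralizer computation above.
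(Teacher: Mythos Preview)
Your proposal is correct and follows exactly the approach the paper intends: the paper's own proof of Lemma~\ref{lema52} consists of the single sentence ``Similar to proof of Lemma~\ref{lema5},'' and your write-up is precisely that adaptation, with $\R'_{ii}$ replaced by $\R'_{jj}$. In fact you supply more detail than either Lemma~\ref{lema5} or Lemma~\ref{lema52} does, by explicitly verifying $\mathcal{Z}(\R')\cap\R'_{jj}=\{0\}$ via Remark~\ref{impobs}; the paper simply asserts the analogous intersection is trivial.
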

\begin{proof}
Similar to proof of Lemma \ref{lema5}.
\end{proof}

\vspace{0.2in}

Now we show that $\psi(ab) = -\psi(b)\psi(a)$ for all $a, b \in \R$. For this purpose we prove the following Lemma whose demonstration is similar to Lemma \ref{lema6}, but for better clarity of the text we will proof it.

\begin{lemma}\label{lema62}
For every $a_{ii}, b_{ii} \in \R_{ii}$, $a_{ij}, b_{ij} \in \R_{ij}$, $b_{ji} \in \R_{ji}$ and $b_{jj} \in \R_{jj}$ with $i \neq j$ we have
\begin{enumerate}
\item[\it $(I')$] $\psi(a_{ii}b_{ij}) = -\psi(b_{ij})\psi(a_{ii})$,
\item[\it $(II')$] $\psi(a_{ij}b_{jj}) = -\psi(b_{jj})\psi(a_{ij})$,
\item[\it $(III')$] $\psi(a_{ii}b_{ii}) = -\psi(b_{ii})\psi(a_{ii})$,
\item[\it $(IV')$] $\psi(a_{ij}b_{ij}) = -\psi(b_{ij})\psi(a_{ij})$,
\item[\it $(V')$] $\psi(a_{ij}b_{ji}) = -\psi(b_{ji})\psi(a_{ij}).$
\end{enumerate}
\end{lemma}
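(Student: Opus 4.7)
The plan is to mirror the proof of Lemma \ref{lema6} verbatim, with two systematic changes forced by the hypothesis $(\dagger\dagger)$: the Peirce targets of $\psi(\R_{ii})$ now lie in $\R'_{jj}$ (up to centre) rather than in $\R'_{ii}$, so every ``ordinary'' product $\psi(a)\psi(b)$ that survives in the isomorphism case is now killed by Peirce rule (iii), while the opposite-order product $\psi(b)\psi(a)$ survives. This is what produces the minus sign and the reversal of factor order in the conclusion.

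For parts $(I')$ and $(II')$ I would argue directly: since $b_{ij}a_{ii}\in \R_{ij}\R_{ii}=0$, we have $a_{ii}b_{ij}=[a_{ii},b_{ij}]$, and central summands of $\varphi(a_{ii})$ drop out of the commutator, so $\psi(a_{ii}b_{ij})=[\psi(a_{ii}),\psi(b_{ij})]$. Because $\psi(a_{ii})\in \R'_{jj}$ and $\psi(b_{ij})\in \R'_{ij}$, the product $\psi(a_{ii})\psi(b_{ij})$ lies in $\R'_{jj}\R'_{ij}=0$, so only $-\psi(b_{ij})\psi(a_{ii})$ remains. Part $(II')$ is identical with the roles interchanged. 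For $(IV')$ I would copy the original argument: $[a_{ij},b_{ij}]=2a_{ij}b_{ij}$ together with $2$-torsion freeness yields $\psi(a_{ij}b_{ij})=\psi(a_{ij})\psi(b_{ij})$, which by (iv.b) in $\R'$ equals $-\psi(b_{ij})\psi(a_{ij})$.

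For $(III')$ I would use the ``test element'' technique: on the one hand $\psi((a_{ii}b_{ii})r_{ij})=-\psi(r_{ij})\psi(a_{ii}b_{ii})$ by $(I')$; on the other hand, associativity of the triple $(a_{ii},b_{ii},r_{ij})$ in the alternative Peirce (both bracketings land in $\R_{ij}$) combined with two applications of $(I')$ gives $\psi(a_{ii}(b_{ii}r_{ij}))=\psi(r_{ij})\psi(b_{ii})\psi(a_{ii})$, where one also uses associativity of the corresponding triple in $\R'$. Equating and invoking Remark \ref{impobs} delivers $\psi(a_{ii}b_{ii})=-\psi(b_{ii})\psi(a_{ii})$.

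The main obstacle, as in the original, will be $(V')$. Writing $\tau([a_{ij},b_{ji}])$ out and splitting by Peirce, the central element $z'\in \mathcal{Z}(\R')$ has $\R'_{ii}$-part $\psi(a_{ij})\psi(b_{ji})+\psi(b_{ji}a_{ij})$ and $\R'_{jj}$-part $-\psi(b_{ji})\psi(a_{ij})-\psi(a_{ij}b_{ji})$; the Peirce positions of $\psi(a_{ij}b_{ji})$ and $\psi(b_{ji}a_{ij})$ are exchanged relative to the iso case because of $(\dagger\dagger)$. Assuming $z'\neq 0$, I would multiply $z'$ on the left by $\psi(a_{ij})$ and analyse term by term: $\psi(a_{ij})^2=0$ annihilates the quadratic piece; Peirce rule (iii) kills $\psi(a_{ij})\psi(b_{ji}a_{ij})$; and $(I')$ applied with $a_{ij}b_{ji}\in \R_{ii}$ converts $\psi(a_{ij})\psi(a_{ij}b_{ji})$ into $-\psi(a_{ij}b_{ji}a_{ij})$. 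Combined with the identity $\psi(a_{ij}b_{ji}a_{ij})=\psi(a_{ij})\psi(b_{ji})\psi(a_{ij})$, obtained by expanding $[[a_{ij},b_{ji}],a_{ij}]=2a_{ij}b_{ji}a_{ij}$ through the Lie multiplicativity (this uses $a_{ij}^2=0$ and flexibility), all terms cancel, giving $\psi(a_{ij})z'=0$. The rest copies Lemma \ref{lema6}: pulling back $z'$ to a central $z\in \mathcal{Z}(\R)$ and using condition (4) to produce $h\in \R$ with $zh=e_1+e_2$, a Peirce expansion of $a_{ij}(z_{kk}h)$ forces $a_{ij}=0$, a contradiction; hence $z'=0$ and $(V')$ follows.
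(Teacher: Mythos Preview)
Your proposal is correct and follows essentially the same route as the paper's own proof: direct commutator computations for $(I')$, $(II')$, $(IV')$; the ``test against $r_{ij}$'' argument together with Remark~\ref{impobs} for $(III')$; and for $(V')$ the central-residue analysis, the triple-commutator identity $[[a_{ij},b_{ji}],a_{ij}]=2a_{ij}b_{ji}a_{ij}$, and condition~(4) to force $a_{ij}=0$. The only cosmetic difference is that in $(V')$ you multiply $z'$ on the \emph{left} by $\psi(a_{ij})$ and invoke $(I')$, whereas the paper multiplies on the \emph{right} and invokes $(II')$; both lead to the same cancellation $\psi(a_{ij})z'=0$.
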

\begin{proof}
Let us start with $(I')$
\begin{eqnarray*}
\psi(a_{ii}b_{ij}) &=& \varphi(a_{ii}b_{ij})= \varphi([a_{ii}, b_{ij}]) = [\varphi(a_{ii}), \varphi(b_{ij})] =\\
&&[\psi(a_{ii}), \psi(b_{ij})] = -\psi(b_{ij})\psi(a_{ii}).
\end{eqnarray*} 

Next $(II')$
\begin{eqnarray*}
\psi(a_{ij}b_{jj}) &=& \varphi(a_{ij}b_{jj})= \varphi([a_{ij}, b_{jj}]) = [\varphi(a_{ij}), \varphi(b_{jj})] =\\
&& [\psi(a_{ij}), \psi(b_{jj})] = -\psi(b_{jj})\psi(a_{ij}).
\end{eqnarray*} 
Now we show $(III')$. By $(I)$ we get
$$\psi((a_{ii}b_{ii})r_{ij}) = -\psi(r_{ij})\psi(a_{ii}b_{ii}).$$ 
On the other hand,
\begin{eqnarray*}
\psi(a_{ii}(b_{ii}r_{ij})) &=& -\psi(b_{ii}r_{ij})\psi(a_{ii}) =\\&&
 -(-\psi(r_{ij})\psi(b_{ii}))\psi(a_{ii}) = \psi(r_{ij})(\psi(b_{ii})\psi(a_{ii})).
\end{eqnarray*}
As $(a_{ii}b_{ii})r_{ij} = a_{ii}(b_{ii}r_{ij})$ and $-\psi(r_{ij})\psi(a_{ii}b_{ii}) = \psi(r_{ij})(\psi(b_{ii})\psi(a_{ii}))$ we obtain
$$\psi(r_{ij})(\psi(a_{ii}b_{ii}) + \psi(b_{ii})\psi(a_{ii})) = 0$$
for all $\psi(r_{ij}) \in \R'_{ij}$. So $\psi(a_{ii}b_{ii}) = -\psi(b_{ii})\psi(a_{ii})$ by Remark \ref{impobs}.

Next ($IV'$). 
\begin{eqnarray*}
2\psi(a_{ij}b_{ij}) &=& \psi(2a_{ij}b_{ij}) = \varphi(2a_{ij}b_{ij}) = \varphi([a_{ij},b_{ij}]) = [\varphi(a_{ij}), \varphi(b_{ij})] = \\&& 
[\psi(a_{ij}), \psi(b_{ij})] = \psi(a_{ij})\psi(b_{ij}) - \psi(b_{ij})\psi(a_{ij}) = -2\psi(b_{ij})\psi(a_{ij}) 
\end{eqnarray*}
As $\R'$ is $2$- torsion free it is follow that $\psi(a_{ij}b_{ij}) = -\psi(b_{ij})\psi(a_{ij})$. 

And finally we show $(V')$. We have
\begin{eqnarray*}
\tau([a_{ij}, b_{ji}]) &=& \varphi([a_{ij}, b_{ji}]) - \psi([a_{ij}, b_{ji}]) = [\varphi(a_{ij}), \varphi(b_{ji})] - \psi(a_{ij}b_{ji} - b_{ji}a_{ij}) =\\
&& [\psi(a_{ij}), \psi(b_{ji})] - \psi(a_{ij} b_{ji}) + \psi(b_{ji}a_{ij}) =\\&& 
\psi(a_{ij})\psi(b_{ji}) - \psi(b_{ji})\psi(a_{ij}) - \psi(a_{ij}b_{ji}) + \psi(b_{ji}a_{ij}), 
\end{eqnarray*}
which implies 
$$[- \psi(a_{ij}b_{ji})- \psi(b_{ji})\psi(a_{ij}) ] + [\psi(a_{ij})\psi(b_{ji}) + \psi(b_{ji}a_{ij})] = z' \in \mathcal{Z}(\R').$$
If $z'= 0$ then $\psi(a_{ij}b_{ji}) = -\psi(b_{ji})\psi(a_{ij}).$
If $z' \neq 0$ we multiply by $\psi(a_{ij})$ we get
$$\psi(a_{ij})\psi(b_{ji})\psi(a_{ij}) + \psi(b_{ji}a_{ij})\psi(a_{ij}) = \psi(a_{ij})z'.$$
By $(II')$ we have
\begin{eqnarray}\label{dif}
-\psi(a_{ij}b_{ji}a_{ij}) + \psi(a_{ij})\psi(b_{ji})\psi(a_{ij}) = \psi(a_{ij})z'.
\end{eqnarray}
Now we observe that $\psi(a_{ij}b_{ji}a_{ij}) = \psi(a_{ij})\psi(b_{ji})\psi(a_{ij})$. In deed, observe that $[[a_{ij}, b_{ji}],a_{ij}] = 2a_{ij}b_{ji}a_{ij}$. Then
\begin{eqnarray*}
2\psi(a_{ij}b_{ji}a_{ij}) &=& \psi(2a_{ij}b_{ji}a_{ij}) = \varphi([[a_{ij}, b_{ji}],a_{ij}]) =[[\varphi(a_{ij}), \varphi(b_{ji})],\varphi(a_{ij})] =\\ &&
  [[\psi(a_{ij}), \psi(b_{ji})],\psi(a_{ij})] = 2 \psi(a_{ij})\psi(b_{ji})\psi(a_{ij})
\end{eqnarray*}
Since $\R'$ is $2$-torsion free we get $\psi(a_{ij}b_{ji}a_{ij}) = \psi(a_{ij})\psi(b_{ji})\psi(a_{ij})$.
So $\psi(a_{ij})z' = 0$ that implies $a_{ij}z = 0$ with $z \in \mathcal{Z}(\R)$ but by $(4)$ there exist $h \in R$ such that $zh = e_1 + e_2$ and $a_{ij} = 0$ which is a contradiction. Therefore $\psi(a_{ij}b_{ji}) = -\psi(b_{ji})\psi(a_{ij}).$
   
\end{proof}

\begin{lemma}\label{lema72}
$\psi$ is an negative of an homomorphism.
\end{lemma}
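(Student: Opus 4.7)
The plan is to extend the five piecewise multiplicative identities of Lemma \ref{lema62} to the global relation
$$\psi(ab) \;=\; -\psi(b)\psi(a)\qquad \text{for all } a,b\in\R,$$
which, combined with the additivity of $\psi$ (Lemma \ref{lema52}), is the precise content the Theorem requires in its second conclusion: $-\psi$ is to realise the additive anti-isomorphism whose negative is $\psi$. (The Lemma's wording "negative of a homomorphism" is to be read in this sense, consistently with the anti-isomorphism asserted in the Theorem.)

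First I would fix $a,b\in\R$ and decompose $a=\sum_{i,j}a_{ij}$ and $b=\sum_{k,l}b_{kl}$ along the Peirce decomposition of $\R$. Using additivity of $\psi$ and distributivity in $\R$ and $\R'$, I expand
$$\psi(ab) \;=\; \sum_{i,j,k,l}\psi(a_{ij}b_{kl}), \qquad \psi(b)\psi(a) \;=\; \sum_{k,l,i,j}\psi(b_{kl})\psi(a_{ij}).$$
Of the sixteen ordered pairs $(ij,kl)$, exactly ten give a nonzero $a_{ij}b_{kl}$ by Peirce rules (i)--(iii); each such nonzero term falls under one of the patterns $(I')$--$(V')$ of Lemma \ref{lema62}, which immediately yields $\psi(a_{ij}b_{kl})=-\psi(b_{kl})\psi(a_{ij})$. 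Clause (iv.b) of the Peirce relations is used to orient the two factors in case $(IV')$ when needed.

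Second, I would verify that the six vanishing products on the $\R$-side correspond to vanishing products on the $\R'$-side. By Lemmas \ref{lema2} and \ref{lema32}, $\psi$ permutes Peirce components via the involution that swaps diagonal labels $1\leftrightarrow 2$ and fixes off-diagonal ones; because this involution preserves the incidence pattern of Peirce rule (iii), each zero product in $\R$ is matched by a zero product in $\R'$. For example, $a_{11}b_{22}=0$ is matched by $\psi(b_{22})\psi(a_{11})\in\R'_{11}\R'_{22}=0$; $a_{22}b_{11}=0$ by $\psi(b_{11})\psi(a_{22})\in\R'_{22}\R'_{11}=0$; $a_{11}b_{21}=0$ by $\psi(b_{21})\psi(a_{11})\in\R'_{21}\R'_{22}=0$; and symmetrically for the remaining three. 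Assembling the ten matched nonzero contributions and the six matched zero contributions gives $\psi(ab)=-\psi(b)\psi(a)$, as required.

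The main obstacle will be the termwise bookkeeping, since the diagonal swap induced by $\psi$ on the Peirce labels could in principle create spurious nonzero cross-terms on the $\R'$-side that have no counterpart in $\R$. The redeeming feature is that this swap is an involution compatible with Peirce rule (iii) in both rings, so the vanishing pattern transports verbatim from $\R$ to $\R'$. Once tabulated, the matching of the sixteen cases is mechanical, and no further structural hypothesis beyond Lemmas \ref{lema2}, \ref{lema32}, \ref{lema52}, and \ref{lema62} is needed.
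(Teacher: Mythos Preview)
Your proposal is correct and follows essentially the same approach as the paper: decompose $a$ and $b$ along the Peirce components, use the additivity of $\psi$ (Lemma~\ref{lema52}) to expand $\psi(ab)$ into the ten surviving terms $\psi(a_{ij}b_{kl})$, and then invoke Lemma~\ref{lema62} termwise to obtain $-\psi(b)\psi(a)$. Your treatment is in fact more careful than the paper's, since you explicitly verify that the six vanishing Peirce products on the $\R$-side are matched by vanishing products on the $\R'$-side under the diagonal-swapping action of $\psi$; the paper leaves this implicit.
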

\begin{comment}
\begin{proof}
Let be $a, b \in \R$. We have
\begin{eqnarray*}
\psi(ab) &=& \psi((a_{11} + a_{12} + a_{21} + a_{22})(b_{11} + b_{12} + b_{21} + b_{22})) \\&=& \psi(a_{11}b_{11}) + \psi(a_{11}b_{12}) + \psi(a_{12}b_{12}) + \psi(a_{12}b_{21}) + \psi(a_{12}b_{22}) \\&+& \psi(a_{21}b_{11}) + \psi(a_{21}b_{12}) + \psi(a_{21}b_{21}) + \psi(a_{22}b_{21}) + \psi(a_{22}b_{22}) \\&=& 
-\psi(b)\psi(a)
\end{eqnarray*}
by Lemmas \ref{lema52} and \ref{lema62}.
\end{proof}
\end{comment}

\begin{lemma}\label{lema82}
$\tau$ sends the commutators into zero.
\end{lemma}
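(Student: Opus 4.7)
The plan is to follow exactly the same skeleton as the proof of Lemma \ref{lema8}, but carefully track the sign change caused by the fact that $\psi$ is now the negative of an anti-homomorphism rather than a homomorphism. The starting point is the definition $\tau(x) = \varphi(x) - \psi(x)$, which gives
\[
\tau([a,b]) \;=\; \varphi([a,b]) - \psi([a,b]) \;=\; [\varphi(a),\varphi(b)] - \psi([a,b]),
\]
using that $\varphi$ is Lie multiplicative. Since $\tau$ takes values in $\mathcal{Z}(\R')$ and $\varphi = \psi + \tau$, central elements drop out of the bracket, so $[\varphi(a),\varphi(b)] = [\psi(a),\psi(b)]$.

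Next I would expand $\psi([a,b])$. Because $\psi$ is additive (Lemma \ref{lema52}), we have $\psi([a,b]) = \psi(ab) - \psi(ba)$. Now I would invoke Lemma \ref{lema72}, which asserts that $\psi$ is the negative of a homomorphism, i.e.\ $\psi(xy) = -\psi(y)\psi(x)$ for all $x,y \in \R$. Applying this to both terms gives
\[
\psi([a,b]) \;=\; -\psi(b)\psi(a) - \bigl(-\psi(a)\psi(b)\bigr) \;=\; \psi(a)\psi(b) - \psi(b)\psi(a) \;=\; [\psi(a),\psi(b)].
\]
The crucial observation here is that while anti-homomorphisms reverse products, they preserve commutators up to the overall sign that cancels when we take the difference $\psi(ab) - \psi(ba)$.

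Combining these two computations yields $\tau([a,b]) = [\psi(a),\psi(b)] - [\psi(a),\psi(b)] = 0$, which is the claim. I do not expect any real obstacle: the argument is entirely formal and uses only the Lie multiplicativity of $\varphi$, the additivity of $\psi$, the centrality of $\tau$, and the anti-multiplicative identity established in Lemma \ref{lema72}. The proof will therefore be essentially a two-line display, mirroring Lemma \ref{lema8} but with the sign bookkeeping from the anti-homomorphism side.
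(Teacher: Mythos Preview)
Your proof is correct and follows exactly the approach the paper intends: the paper's own proof of Lemma~\ref{lema82} simply reads ``The same proof of Lemma~\ref{lema8},'' and your write-up is precisely that argument with the sign bookkeeping for the negative-anti-homomorphism case made explicit.
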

\begin{proof}
The same proof of Lemma \ref{lema8}.
\end{proof}

\vspace{.2in}

The second part of proof of the Theorem \ref{mainthm} is completed.

\vspace{0,5cm}

\section{Applications}

\begin{corollary}
Let $\R$ be a unital $2$,$3$-torsion free prime alternative ring, $\R'$ another prime alternative ring and $\varphi: \R \rightarrow \R'$ be a surjective Lie multiplicative map that preserves idempotents. Assume that $\R$ has a nontrivial idempotent $e_1$ with associated Peirce decomposition $\R = \R_{11} \oplus \R_{12} \oplus \R_{21} \oplus \R_{22}$ satisfing $(4)$ of Theorem \ref{mainthm}.
Then $\varphi$ is the form $\psi + \tau$, where $\psi$ is an additive isomorphism of $\R$ into $\R'$ and $\tau$ is a map from $\R$ into $\mathcal{Z}(\R')$, which maps commutators into the zero provided that
\begin{enumerate}
\item[$\left(\dagger\right)$] $f_i\varphi(\R_{jj})f_i \subseteq \mathcal{Z}(\R') f_i$
\end{enumerate}
or
$\varphi$ is the form $\psi + \tau$, where $\psi$ is a negative of an additive anti-isomorphism of $\R$ into $\R'$ and $\tau$ is a map from $\R$ into $\mathcal{Z}(\R')$, which maps commutators into the zero provided that
\begin{enumerate}
\item[$\left(\dagger \dagger \right)$] $f_i\varphi(\R_{ii})f_i \subseteq \mathcal{Z}(\R') f_i,$
\end{enumerate}
where $f_i = \varphi(e_i)$ and $f_j = 1_{\R'} - f_i$, $i \neq j$.
\end{corollary}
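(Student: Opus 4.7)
The plan is to reduce the Corollary directly to Theorem \ref{mainthm} by verifying that a unital $2,3$-torsion free prime alternative ring with a nontrivial idempotent automatically satisfies the structural hypotheses $(1)$, $(2)$, $(3)$ of the main theorem; condition $(4)$ is then precisely what is added by assumption.

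First I would invoke the Remark immediately preceding Proposition \ref{prop4}, which records that prime alternative rings satisfy $(1)$, $(2)$, $(3)$. The underlying reason is Theorem \ref{meu}: in a $3$-torsion free alternative ring, primeness is equivalent to the cancellation property $a\mathfrak{R}\cdot b=0 \Rightarrow a=0 \text{ or } b=0$. Combined with the Peirce multiplication relations $(i)$--$(iv.b)$, each of the one-sided Peirce-annihilator hypotheses is forced to vanish. For instance, from $x_{ij}\R_{ji}=0$ with $i\neq j$ one produces, after multiplying by suitable Peirce components on the outside, an identity of the form $x_{ij}\cdot \mathfrak{R}\cdot x_{ij}=0$, whence $x_{ij}=0$; the same trick, using $f_i$ to separate Peirce pieces, handles $(2)$ and $(3)$. (Since this is essentially the cited Remark, I would not redo these computations in full detail.)

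Next, condition $(4)$ is assumed directly in the Corollary, so nothing further is required. The ring $\R$ is unital, $2,3$-torsion free, and carries the stated Peirce decomposition with respect to $e_1$; the map $\varphi$ is a surjective Lie multiplicative map preserving idempotents. Hence every hypothesis of Theorem \ref{mainthm} is in force. (Note that the primeness of $\R'$ plays no role in the argument, as Theorem \ref{mainthm} itself only requires $\R'$ to be an alternative ring.)

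Finally I would simply apply Theorem \ref{mainthm} to $\varphi$ and split the conclusion along the two auxiliary hypotheses: if $f_i\varphi(\R_{jj})f_i \subseteq \mathcal{Z}(\R')f_i$ holds, then $\varphi=\psi+\tau$ with $\psi$ an additive isomorphism $\R\to\R'$ and $\tau$ a map into $\mathcal{Z}(\R')$ which annihilates commutators, while if $f_i\varphi(\R_{ii})f_i \subseteq \mathcal{Z}(\R')f_i$ holds, then $\psi$ is instead the negative of an additive anti-isomorphism and $\tau$ has the same property. I expect no real obstacle: the Corollary is a clean specialization of the main theorem, and the only genuinely substantive ingredient is the passage from primeness to the Peirce-annihilator conditions $(1)$--$(3)$, which is packaged in the cited Remark and ultimately in Theorem \ref{meu}.
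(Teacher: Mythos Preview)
Your proposal is correct and matches the paper's approach exactly: the paper states this corollary without proof, relying on the earlier Remark that prime alternative rings satisfy $(1)$, $(2)$, $(3)$, so that assuming $(4)$ puts one directly in the setting of Theorem~\ref{mainthm}. Your observation that the primeness of $\R'$ is not actually used is accurate as well.
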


We provide an application on simple alternative rings.

\begin{corollary}
Let $\R$ be a unital $2$,$3$-torsion free simple alternative ring, $\R'$ another simple alternative ring and $\varphi: \R \rightarrow \R'$ be a surjective Lie multiplicative map that preserves idempotents. Assume that $\R$ has a nontrivial idempotent $e_1$ with associated Peirce decomposition $\R = \R_{11} \oplus \R_{12} \oplus \R_{21} \oplus \R_{22}$.
Then $\varphi$ is the form $\psi + \tau$, where $\psi$ is an additive isomorphism of $\R$ into $\R'$ and $\tau$ is a map from $\R$ into $\mathcal{Z}(\R')$, which maps commutators into the zero provided that
\begin{enumerate}
\item[$\left(\dagger\right)$] $f_i\varphi(\R_{jj})f_i \subseteq \mathcal{Z}(\R') f_i$
\end{enumerate}
or
$\varphi$ is the form $\psi + \tau$, where $\psi$ is a negative of an additive anti-isomorphism of $\R$ into $\R'$ and $\tau$ is a map from $\R$ into $\mathcal{Z}(\R')$, which maps commutators into the zero provided that
\begin{enumerate}
\item[$\left(\dagger \dagger \right)$] $f_i\varphi(\R_{ii})f_i \subseteq \mathcal{Z}(\R') f_i,$
\end{enumerate}
where $f_i = \varphi(e_i)$ and $f_j = 1_{\R'} - f_i$, $i \neq j$.
\end{corollary}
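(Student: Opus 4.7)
The plan is to reduce this corollary to a direct invocation of Theorem \ref{mainthm} by showing that a unital simple alternative ring automatically satisfies all four structural conditions (1)--(4). First, any unital simple ring is prime: if $\mathfrak{A}, \mathfrak{B}$ are nonzero ideals of $\R$ then both equal $\R$, so $\mathfrak{A}\mathfrak{B} \ni 1 \cdot 1 = 1$ and hence $\mathfrak{A}\mathfrak{B} \neq 0$. The remark noting that prime alternative rings satisfy (1)--(3) then supplies those three conditions for free.

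The substantive work is verifying condition (4). Given a nonzero $z \in \mathcal{Z}(\R)$, I would show that $z\R$ is a nonzero two-sided ideal of $\R$ and invoke simplicity to conclude $z\R = \R$. Non-vanishing is immediate since $z = z \cdot 1 \in z\R$. Closure under left multiplication by $\R$ follows from $z$ being central together with Artin's theorem, which lets one associate freely inside any subring generated by two elements. Closure under right multiplication uses the classical structural fact that in a unital alternative ring the commutative centre is contained in the nucleus, giving $(zr)s = z(rs) \in z\R$ for all $r, s \in \R$. Thus $z\R$ is a nonzero two-sided ideal, and simplicity forces $z\R = \R$, establishing (4).

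With (1)--(4) verified, Theorem \ref{mainthm} applies verbatim and yields the two possible forms of $\varphi$ according to whether $(\dagger)$ or $(\dagger\dagger)$ holds. The main obstacle I anticipate is the step placing the commutative centre inside the nucleus; this is a standard consequence of the Kleinfeld--Bruck identities together with Artin's theorem, but it deserves either a short inline justification or a citation to remain self-contained. A more hands-on alternative, bypassing nuclearity altogether, is to apply simplicity directly to the ordinary two-sided ideal generated by $z$, deduce that $1$ lies in that ideal, and then exploit the centrality of $z$ together with Artin's theorem to collapse the resulting expression for $1$ into a single product $zh$ with $h \in \R$, which is exactly what the main theorem invokes condition (4) for in Lemma \ref{lema6}.
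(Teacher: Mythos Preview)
Your reduction is exactly what the paper intends: the corollary is stated without proof, and the implicit argument is precisely that a unital simple alternative ring is prime (hence satisfies (1)--(3) by the remark) and satisfies (4), so Theorem \ref{mainthm} applies directly.

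One small correction to your verification of (4): Artin's theorem does not do the work you assign it for left multiplication. To show $s(zr)\in z\R$ you need to reassociate inside a product involving the \emph{three} elements $s,z,r$, whereas Artin only guarantees associativity of the subring generated by two elements. The correct tool is the one you already invoke for the right side: in a $3$-torsion free alternative ring the commutative centre lies in the nucleus (this follows from the alternating property of the associator together with $[z,\R]=0$, yielding $3(z,x,y)=0$). Once $z$ is nuclear, both $s(zr)=(sz)r=(zs)r=z(sr)$ and $(zr)s=z(rs)$ go through immediately, so $z\R$ is a two-sided ideal and simplicity gives $z\R=\R$. With that adjustment your argument is complete and matches the paper's intended route.
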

%\begin{proof}
%It is enough to observe that every simple alternative ring is prime and $\mathcal{Z}(\R)$ is a field.
%\end{proof}

As a last application we provide a result in associative rings.

\begin{corollary}
Let $\R$ be a unital $2$,$3$-torsion free simple associative ring, $\R'$ another simple associative ring and $\varphi: \R \rightarrow \R'$ be a surjective Lie multiplicative map that preserves idempotents. Assume that $\R$ has a nontrivial idempotent. Then $\varphi$ is of the form $\psi + \tau$, where $\psi$ is either
an isomorphism or the negative of an anti-isomorphism of $\R$ onto $\R'$ and $\tau$ is an additive mapping of $\R$ into the centre of $\R'$ which maps commutators into zero.
\end{corollary}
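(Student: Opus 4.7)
The plan is to reduce to Theorem \ref{mainthm}. First I would verify the four numerical hypotheses. Every associative ring is alternative, so Theorem \ref{mainthm} is applicable. Every simple ring is prime, and the remark preceding Proposition \ref{prop4} together with Theorem \ref{meu} shows that prime alternative (hence prime associative) rings automatically satisfy $(1)$, $(2)$, $(3)$. Condition $(4)$ is equally direct: for any nonzero $z \in \mathcal{Z}(\R)$, the set $z\R$ is a two-sided ideal of $\R$ (using associativity and the centrality of $z$), nonzero because $\R$ is unital, hence equal to $\R$ by simplicity.

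The heart of the argument is to show that, in the simple associative setting, one of the structural hypotheses $(\dagger)$, $(\dagger\dagger)$ must hold, so that the conclusion is genuinely unconditional. For each $a_{ii} \in \R_{ii}$ the identity $[a_{ii}, e_i] = 0$ together with Remark \ref{obs1} forces $\varphi(a_{ii}) \in \R'_{11} + \R'_{22}$, so we may write $\varphi(a_{ii}) = c^{(i)}_{11}(a_{ii}) + c^{(i)}_{22}(a_{ii})$. The relation $[\R'_{11}, \R'_{22}] = 0$ together with the Lie-multiplicativity of $\varphi$ then makes both $a_{ii} \mapsto c^{(i)}_{11}(a_{ii})$ and $a_{ii} \mapsto c^{(i)}_{22}(a_{ii})$ Lie-multiplicative maps into simple associative rings, since each Peirce corner $f_k \R' f_k$ of a simple associative ring is itself simple. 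By the classical Herstein--Martindale theory \cite{Mart,Mart2} for Lie homomorphisms of simple associative rings, each such slice factors, modulo a central-valued perturbation, as either a ring homomorphism or the negative of an anti-homomorphism.

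The main obstacle is the consistency check across the two indices $i = 1, 2$: one must rule out the mixed scenario and conclude that either both ``opposite-component'' slices $c^{(1)}_{22}, c^{(2)}_{11}$ take values in $\mathcal{Z}(\R')$ (which is $(\dagger)$) or both ``same-component'' slices $c^{(1)}_{11}, c^{(2)}_{22}$ do (which is $(\dagger\dagger)$). This consistency is forced by testing compatibility with the off-diagonal Peirce relations $\R_{ii}\R_{ij} \subseteq \R_{ij}$ under $\varphi$, using Lemma \ref{lema2} to control the image of $\R_{ij}$. Once one of the two conditions is secured, Theorem \ref{mainthm} yields the decomposition $\varphi = \psi + \tau$ with $\psi$ an additive isomorphism (respectively the negative of an additive anti-isomorphism) of $\R$ onto $\R'$ and $\tau$ taking values in $\mathcal{Z}(\R')$ and annihilating commutators; the additivity of $\tau$ follows from Lemma \ref{lema4} combined with the additivity of $\psi$.
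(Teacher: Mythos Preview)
Your reduction of hypotheses $(1)$--$(4)$ to simplicity is fine, and the paper does exactly the same (implicitly, via the preceding corollaries). The divergence from the paper, and the real difficulty, is in how you establish the dichotomy $(\dagger)$ versus $(\dagger\dagger)$.

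The paper does not slice $\varphi$ into the corner maps $c^{(i)}_{kk}\colon \R_{ii}\to \R'_{kk}$; instead it quotes Theorem~3 of \cite{Mart2} directly for the global map $\varphi$, which already delivers the dichotomy
\[
\varphi(\R_{11})\subseteq \R'_{11}+\mathcal Z(\R'),\ \varphi(\R_{22})\subseteq \R'_{22}+\mathcal Z(\R')
\quad\text{or}\quad
\varphi(\R_{11})\subseteq \R'_{22}+\mathcal Z(\R'),\ \varphi(\R_{22})\subseteq \R'_{11}+\mathcal Z(\R'),
\]
and these are precisely $(\dagger)$ and $(\dagger\dagger)$. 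Your alternative route---apply Martindale's structure theory to each slice $c^{(i)}_{kk}$ separately---does not go through as written. The theorems in \cite{Mart,Mart2} concern Lie \emph{isomorphisms} of rings possessing a nontrivial idempotent; neither hypothesis is available for your slices. The map $c^{(i)}_{kk}$ need be neither injective nor surjective, and the corner ring $\R_{ii}=e_i\R e_i$ can very well be a division ring (take $\R=M_2(F)$, $e_1=e_{11}$, so that $\R_{11}\cong F$), in which case there is no nontrivial idempotent to work with and Martindale's arguments do not apply. Your subsequent ``consistency check'' is then left with nothing to check. So the dichotomy step has a genuine gap; you should invoke \cite{Mart2} globally as the paper does, rather than on the corners.

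A smaller point: your final sentence is incorrect. From $\tau=\varphi-\psi$ with $\psi$ additive one gets $\tau(a+b)-\tau(a)-\tau(b)=\varphi(a+b)-\varphi(a)-\varphi(b)$, and Lemma~\ref{lema4} only tells you this difference is central, not that it vanishes. Almost additivity of $\varphi$ together with additivity of $\psi$ does not force $\tau$ to be additive.
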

\begin{proof}
By Theorem $3$ in \cite{Mart2} we have that either
\begin{eqnarray*}
(i) &&\R_{11} = \left\{a \in \R_{11} ~|~  \varphi(a) \in \R'_{11} + \mathcal{Z}(\R') \right\}  \\&& and \\&& \R_{22} = \left\{b \in \R_{22} ~|~  \varphi(b) \in \R'_{22} + \mathcal{Z}(\R') \right\}
	\end{eqnarray*}

or

\begin{eqnarray*}
(ii) &&\R_{11} = \left\{a \in \R_{11} ~|~  \varphi(a) \in \R'_{22} + \mathcal{Z}(\R') \right\}  \\&&and \\&& \R_{22} = \left\{b \in \R_{22} ~|~  \varphi(b) \in \R'_{11} + \mathcal{Z}(\R') \right\}.	
\end{eqnarray*}
If $(i)$ is satisfied then $(\dagger)$ of Theorem \ref{mainthm} is holds. But if $(ii)$ is satisfied then $(\dagger \dagger)$ of Theorem \ref{mainthm} is holds.
Therefore $\varphi$ is of the form $\psi + \tau$, where $\psi$ is either an isomorphism or the negative of an anti-isomorphism of $\R$ onto $\R'$ and $\tau$ is an additive mapping of $\R$ into the centre of $\R'$ which maps commutators into zero.
\end{proof}

\end{document}